\newcommand{\DD}{\mathbb{D}}
\newcommand{\CC}{\mathbb{C}}
\newcommand{\TT}{\mathbb{T}}
\newcommand{\NN}{\mathbb{N}}
\newcommand{\LL}{\mathcal{L}}
\newcommand{\CP}{\mathbb{CP}}
\newcommand{\parbar}{\overline{\partial}}
\newcommand{\sbars}{\partial S+\overline{\partial S}}
\newtheorem{theorem}{Theorem}[section]
\newtheorem{proposition}{Proposition}[section]
\newtheorem{lemma}{Lemma}[section]
\newtheorem{corollary}{Corollary}[section]
\newtheorem{definition}{Definition}[section]
\begin{document}

\markboth{Carlos Pérez-Garrandés}
{Harmonic currents on certains laminations}

\title{Directed harmonic currents for laminations \\on certain compact complex surfaces}

\author{Carlos Pérez-Garrandés}
\date{April 2013}
\maketitle

\begin{abstract}
Let $\LL$ be a Lipschitz lamination by Riemann surfaces embedded in $M$. If $M$ is a complex torus, $\CP^1\times\CP^1$ or $\TT^1\times\CP^1$ and there is no directed closed current then there exists a unique directed harmonic current of mass one. Moreover if $\LL$ is embedded in $M=\CP^1\times\CP^1$ and has no compact leaves, then there is no directed closed current. If $\LL$ is not Lipschitz, then slightly weaker results are obtained.
\end{abstract}



\begin{section}{Introduction}The aim of this work is to show new examples where the theory of Fornæss and Sibony, developed for homogeneous Kähler manifolds in their fundamental article \cite{FS1}, can be applied. In that article and in \cite{FS2} they face the case of $\CP^2$. In this paper, we consider complex tori and products of curves.\par

Holomorphic non singular foliations in homogeneous Kähler manifolds are studied by Ghys in \cite{Gh1}. There, we can find a classification of foliations in two dimensional tori. Ghys studies the minimal sets of these foliations, and we notice that all of them are holomorphically flat, in the sense of \cite{Oh}. Ohsawa introduces this concept and proves that every Levi-flat is either holomorphically flat or a Levi-scroll, depending on whether the induced foliation on it is minimal or not, respectively. Our main result in this context is: for a transversally Lipschitz lamination embedded on a two dimensional complex torus without directed closed currents there is a unique directed harmonic current of mass one.\par

We will also establish similar results in the case of $\CP^1\times\CP^1$. In fact, we obtain that, for a transversally Lipschitz laminations without compact leaves, there are no directed closed currents, so there is a unique directed harmonic current of mass one. This is related to results in \cite{BG}, where it is proven that a Riccati equation without transversal invariant measure has a unique harmonic measure in the limit set. 
\end{section}
\begin{section} {Revisiting Fornæss-Sibony}
First of all, we will revisit \cite{FS1} to clarify our exposition. Let $(M, \omega)$ be a homogeneous Kähler surface and $T$ will be a real harmonic current of bidegree $(1,1)$ in $M$. Then $T$ can be decomposed as $T=\Omega+\sbars$ where $\Omega$ is a unique closed $\square$-harmonic form of bidegree $(1,1)$, and $S$ is a current of bidegree $(0,1)$ which is not uniquely determined, but $\partial\overline{S}$ is. Moreover, $T$ is closed if and only if $\partial\overline{S}=0$.\par

Since $T=\Omega+\sbars$ with $\Omega$ and $\partial\overline{S}$ uniquely determined, the energy of $T$ is defined as

\begin{equation*}
E(T)=\int{\parbar S\wedge\partial\overline{S}}
\end{equation*}
when $\parbar S$ is in $L^2$. Then $0\leq E(T)<\infty$ and the energy depends only on $T$ but not on the choice of $S$. Considering a scalar product $\langle\, ,\rangle$ on the space of $\square$-harmonic forms, a real inner product and a seminorm  are defined on $\mathcal{H}_e= \{T,$ with $ E(T)<\infty\}$ as 
$$\langle T_1,T_2\rangle_e=\langle\Omega_1,\Omega_2\rangle+\frac{1}{2}\left(\int{\parbar S_1\wedge\partial\overline{S}_2}+{\parbar S_2\wedge\partial\overline{S}_1}\right)$$ 
$$\|T\|_e^2=\langle\Omega,\Omega\rangle+\int{\parbar S\wedge \partial\overline{S}}.$$
With this seminorm we can define a Hilbert space $H_e$ of classes $[T]$as follows: $T_1$, $T_2$ are in the same class if and only if $T_1=T_2+i\partial\parbar u$ with $u\in L^1$ and $u$ real.
\par

Now, for $T_1,T_2$ currents, an intersection form $Q$ is defined by
$$Q(T_1,T_2)=\int{\Omega_1\wedge\Omega_2}-\int{(\parbar S_1\wedge\partial \overline{S_2}+\parbar S_2\wedge\partial \overline{S_1})}.$$
Then $Q(T,T)=\int{\Omega\wedge\Omega}-2E(T)$. This is a continuous bilinear form on $H_e$ and $Q(T,T)$ is upper semicontinuous for the weak topology on $H_e$ and for $T$ harmonic positive current, $Q(T,T)\geq 0$. A class $[T]$ is positive if there is any positive harmonic current in the class $[T]$. Defining the hyperplane $\mathcal{H}=\{T,T\in\, H_e,\int{T\wedge\omega=0}\}$, it can be proven that $Q$ is strictly negative definite on $\mathcal{H}$.\par

Next, this approach is used to study laminar currents. Let $(X,\LL,E)$ be a laminated set with singularities in $(M,\omega)$, a Kähler surface. There exists a unique equivalence class $[T]$ of harmonic currents of mass one directed by the lamination and maximizing $Q(T,T)$ because $Q$ is strictly concave on $\mathcal{H}$. However, this uniqueness is for equivalence classes, not for currents. It is necessary to assume some extra hypotheses:

\begin{theorem}
Let $(X,\LL,E)$ be a laminated set with singularities in a Kähler surface $(M,\omega)$. Suppose $E$ is a locally complete pluripolar set with 2-dimensional Hausdorff measure $\Lambda_2(E)=0$. If there is no non-zero positive laminated closed current, then there is a unique positive harmonic laminated current $T$ of mass one maximizing $Q(T,T)$. 
\end{theorem}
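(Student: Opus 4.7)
The earlier Fornaess--Sibony theory, recalled in this section, already produces a unique equivalence class $[T_0]$ of directed harmonic currents of mass one maximizing $Q$, via strict concavity of $Q$ on $\mathcal{H}$. So the plan is to reduce the theorem to uniqueness \emph{within} this class, and then rule out any non-trivial motion inside the class. Concretely, suppose $T_1$ and $T_2$ are two positive directed harmonic currents of mass one both maximizing $Q$; since $Q$ depends only on classes they lie in $[T_0]$, so we may write $T_1 - T_2 = i\partial\overline{\partial}u$ with $u \in L^1(M)$ real, and the goal becomes $i\partial\overline{\partial}u = 0$.

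The main work is local, in flow boxes for $\LL$ in $M \setminus E$. Fix a flow box $U \cong B \times \Sigma$ with $B$ a plaque and $\Sigma$ a transversal. Each $T_j|_U$ admits the standard representation as $\int h_j(\cdot,\tau)[P_\tau]\,d\mu_j(\tau)$, with $h_j(\cdot,\tau)$ a positive harmonic function along the plaque $P_\tau$ and $\mu_j$ a positive transverse measure. I would use this, together with the identity $T_1 - T_2 = i\partial\overline{\partial}u$ and the positivity of both sides' origins, to analyze $u$ leafwise: a Harnack-type comparison of $h_1$ and $h_2$ on each plaque forces $u$ to be, modulo leafwise harmonic terms, essentially constant on almost every leaf with respect to the transverse measures of $T_1$ and $T_2$.

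The decisive step invokes the hypothesis that there is no non-zero positive laminated closed current. Decomposing $i\partial\overline{\partial}u$ into its directed positive and negative parts (via the local decomposition above), each part would yield a directed current which, by the leafwise near-constancy of $u$, is \emph{closed} and directed. The absence of such currents then forces these parts to vanish, hence $i\partial\overline{\partial}u = 0$ and $T_1 = T_2$.

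The principal obstacle is controlling $u$ and the associated pieces of $i\partial\overline{\partial}u$ near the singular set $E$: a priori $u$ is only in $L^1$ and the decomposition into directed positive/negative parts is constructed off $E$. This is precisely where the hypotheses enter: local complete pluripolarity of $E$ together with $\Lambda_2(E)=0$ permit the application of removable singularity theorems (both for psh/harmonic functions and for the associated positive currents) to propagate the conclusion across $E$, so that the global argument can indeed proceed as if the lamination were regular everywhere. Making the extension step rigorous, and verifying that no nontrivial mass is hidden on $E$, is what I expect to be the hardest part of the proof.
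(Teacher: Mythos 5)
First, a remark on provenance: the paper does not prove this statement at all --- it is a theorem of Forn\ae ss--Sibony, quoted verbatim in the section ``Revisiting Forn\ae ss--Sibony'' from \cite{FS1} and used later as a black box. So there is no in-paper proof to compare against; I can only measure your sketch against the known argument. Your first reduction is correct and is exactly what the surrounding text of the paper records: strict concavity of $Q$ on $\mathcal{H}$ gives uniqueness of the maximizing \emph{class}, so two maximizers satisfy $T_1-T_2=i\partial\overline{\partial}u$ with $u\in L^1$ real, and the entire content of the theorem is that this forces $T_1=T_2$. Your reading of the role of the hypotheses on $E$ (pluripolarity and $\Lambda_2(E)=0$ as removable-singularity hypotheses ensuring no mass or energy hides on $E$) is also correct in spirit.

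The gaps are in the second half, which is where the theorem actually lives. (i) The claim that a ``Harnack-type comparison of $h_1$ and $h_2$'' forces $u$ to be essentially constant on leaves is not an argument: Harnack controls the oscillation of a single positive harmonic function on a plaque, and nothing you write connects the leafwise behaviour of $u$ to a comparison of the densities. What constrains $u$ is rather that $i\partial\overline{\partial}u$ is an order-zero current supported on the lamination and directed by it, so $u$ is pluriharmonic off $X$ and has a specific potential-theoretic structure across the plaques; extracting (a weak form of) leafwise constancy from this is a substantial argument, not a corollary of Harnack. (ii) More seriously, the assertion that the positive and negative parts of $i\partial\overline{\partial}u$ are \emph{closed} directed currents is unjustified and, as stated, false: the positive part $T^+$ of the Hahn decomposition is dominated by $T_1$, hence is a positive \emph{harmonic} directed current, but restricting the transverse measure of a harmonic current gives no reason whatsoever for closedness ($d$-closedness requires holonomy invariance of the transverse measure together with constancy of the densities, neither of which follows from taking a positive part). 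The whole difficulty of the theorem is precisely to manufacture a non-zero positive closed directed current from the assumption that $u$ is not constant --- in Forn\ae ss--Sibony this comes from a delicate analysis of $u$ along the leaves, not from the Hahn decomposition alone --- and that construction is exactly what is missing from your sketch.
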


This implies that under the same hypotheses, when $Q(T,T)=0$ for every $T$ positive laminated harmonic current, there exists a unique positive laminated harmonic current of mass one.\par

Finally, the case of a minimal lamination on $\CP^2$ is considered, and it is proven that $Q(T,T)=0$ for every $T$ positive harmonic laminated current when the lamination is Lipschitz or the current has finite transversal energy. That is done defining the geometric intersection between laminated currents, finding some bounds in the number of crossing points between a plaque of the lamination and another plaque moved by an automorphism of $\CP ^2$ such that this geometric autointersection vanishes for every $T$, and proving the vanishing of $Q(T,T)$ by regularizing the current. This reasoning is done for minimal laminations, but the proof is similar if we consider laminations with only one minimal set.\par

Hence, they state

\begin{theorem}
Let $(X,\LL)$ be a $C^1$ laminated compact set in $\CP^2$, without compact curves, then $X$ has a unique positive directed closed harmonic current $T$ of mass 1.
\end{theorem}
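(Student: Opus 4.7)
The plan is to combine Theorem 2.1 with the vanishing of the intersection form $Q$ specific to $\CP^2$. First I would verify that a $C^1$ lamination on $\CP^2$ without compact curves admits no non-zero positive directed closed current. Indeed, any positive closed $(1,1)$-current on $\CP^2$ is cohomologous to a positive multiple of the Fubini--Study form and hence has strictly positive cohomological self-intersection; but a laminar positive closed current whose support contains no compact complex curve has vanishing geometric self-intersection, yielding a contradiction. Consequently the hypothesis of Theorem 2.1 is satisfied, and there exists a unique equivalence class $[T]$ of positive directed harmonic currents of mass one maximizing $Q(T,T)$.

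To upgrade uniqueness of the equivalence class to uniqueness of the current itself, I would invoke the $\CP^2$-specific argument described in the excerpt. For a $C^1$ (hence Lipschitz) lamination one defines a geometric self-intersection of a positive directed harmonic current $T$ by moving a second copy of $T$ by a generic element of $\mathrm{Aut}(\CP^2)$ so that plaques cross only transversally, and counting crossings with suitable weights. The homogeneity of $\CP^2$ combined with the absence of compact leaves forces quantitative bounds on the crossing count which show that this geometric self-intersection vanishes for every such $T$. A regularization procedure, using the Lipschitz structure of the holonomy, then converts the vanishing of the geometric intersection into the analytic identity $Q(T,T)=0$. By the implication stated immediately after Theorem 2.1, since $Q(T,T)=0$ for every positive directed harmonic current of mass one, the unique equivalence class produced above contains a unique representative, which is the sought-after current.

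The main obstacle is establishing $Q(T,T)=0$. The definition of the geometric self-intersection, the choice of a moving family in $\mathrm{Aut}(\CP^2)$ achieving transversality on each plaque, and the passage from geometric to analytic intersection via regularization are all delicate and rely essentially on the $C^1$ regularity; the ruling out of directed closed currents and the application of Theorem 2.1 are by comparison routine once the cohomology of $\CP^2$ is used. The excerpt notes that the argument in \cite{FS2} is written for minimal laminations but adapts to laminations with a unique minimal set, which is the relevant setting here after compact curves have been excluded.
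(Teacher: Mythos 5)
Your overall architecture is the same as the paper's: reduce to Theorem 2.1 by ruling out directed closed currents, then establish $Q(T,T)=0$ for every positive directed harmonic current via the geometric self-intersection bound for plaques moved by automorphisms of $\CP^2$ plus regularization, and conclude uniqueness of the representative within the unique maximizing class. Two points deserve comment.

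First, there is a genuine gap where you write that the unique-minimal-set setting is ``the relevant setting here after compact curves have been excluded.'' Absence of compact curves does not by itself give a unique minimal set; the paper derives this from the solvability of the Levi problem on $\CP^2$ (together with the absence of singularities). This is not a cosmetic point: the crossing-count argument requires continuing any pair of plaques along a path of boundedly many flow boxes until they reach a distinguished (``special'') flow box sitting inside a minimal set, and this is only possible if every leaf accumulates on that one minimal set. With two or more disjoint minimal sets the bound $\mathrm{Card}(\Gamma_\alpha\cap\Gamma_\beta^\epsilon)<N$ cannot be propagated to the whole support of $T$, and the vanishing of $Q(T,T)$ --- hence uniqueness --- is not obtained. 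You need to supply the Levi-problem step (or an equivalent argument) explicitly.

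Second, your exclusion of directed closed currents takes a different route from the paper, which simply cites Hurder--Mitsumatsu. Your cohomological argument (positive cohomological self-intersection in $\CP^2$ versus vanishing geometric self-intersection of a diffuse directed closed current) is in the spirit of what the paper itself does for $\CP^1\times\CP^1$ in its final corollary, but it is not free: you must justify (i) that the cohomological product of the closed laminar current with itself is computed geometrically (this is Dujardin's theorem on geometric intersection of laminar currents, not a formal consequence of positivity), and (ii) that the transverse measure has no atoms, which is where the absence of compact curves actually enters (an atom would force a leaf of finite area, hence a compact curve by Bishop-type arguments). As written, the sentence ``has vanishing geometric self-intersection'' compresses both of these into an assertion. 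Either supply these two steps or replace the argument by the citation the paper uses.
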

The hypothesis mention neither minimality nor closed currents because by Hurder-Mitsumatsu \cite{HM}, absence of compact curves implies no directed positive closed currents, and since Levi problem is solvable on $\CP^2$ and there are no singularities, there is only one $X'\subset X$ minimal set.
\end{section}
\begin{section}{Complex Tori}
We want to study minimal laminations by Riemann surfaces embedded holomorphically in two dimensional tori. Then $\TT^2=\frac{\CC^2}{\Lambda}$, and we have a locally injective projection $\pi:\CC^2\rightarrow \TT^2$ which induces the complex structure on $\TT^2$. Since the embedding is holomorphic, flow boxes are open sets $U$ on $\CC^2$ where $\pi$ is injective and we can write every plaque as a graph of a holomorphic function of $z$ (horizontal flow box) or $w$ (vertical flow box). Explicitly:

\begin{definition} $U\subset\CC^2$ is a horizontal flow box centered on $p=(p_1,p_2)$ where $\pi$ is injective and there is $\delta>0$ and $T_U\subset\CC$ containing $p_2$ such that $\psi_U:\Delta_\delta\times T_U\rightarrow U$ with $\psi(z,w)=(p_1+z,w+f_w(z))$, with $f_w$ a holomorphic function vanishing in $z=0$.
\end{definition} 

\begin{definition} We say a point $p$ of the lamination is horizontal if the unitary tangent vector to the lamination in $p$ is $(1,0)$.\end{definition}

We can define analogously vertical flow boxes and vertical points.\par

\begin{definition}
Let $(X,\LL)$ be a laminated compact set in $\TT^2$. We say that the lamination has invariant complex line segments if there is an affine line $Y$ in $\CC^2$ and $U\subset\CC^2$ open set, such that $Y_{|U}\subset \pi^{-1}(X)_{|U}$.
\end{definition}

Main examples of this kind of laminations are minimal sets for constant vector fields in the torus where every leaf is an affine complex line in the covering $\CC^2$(holomorphically flat laminations). In this sense, there is a paper of Ohsawa \cite{Oh} where he proves that every $C^\infty$ Levi-flat contains a complex segment. Hence if the foliation induced is minimal, then it can only be holomorphically flat.\par

As we know, $\TT^2$ is a complex connected Lie group, so the connected component of the group of automorphisms of $\TT^2$ is $Aut_0(\TT^2)=\TT^2$, and we will denote by $\tau_{(\epsilon_1,\epsilon_2)}(x_1,x_2)=(x_1+\epsilon_1,x_2+\epsilon_2)$ a translation on $\CC^2$ where $x_1,x_2,\epsilon_1,\epsilon_2\in \CC$. These translations induce the automorphisms on $\TT^2$.

\begin{proposition}\label{prop:one}
Let $(X,\LL)$ be a minimal lamination by Riemann surfaces embedded on a torus $\TT^2=\CC^2/ \Lambda$. If there exists $\epsilon_n\rightarrow 0$ such that $\tau_{(0,\epsilon_n)}(\LL)=\LL$, then either every point is vertical or there are no vertical points.
\end{proposition}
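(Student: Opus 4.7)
The plan is to examine the tangent direction map $\nu:X\to\CP^1$, which is continuous by the lamination structure. On each leaf $L$ parameterized locally by a holomorphic chart $\phi=(\phi_1,\phi_2)$, we have $\nu=[\phi_1':\phi_2']$, so the vertical points on $L$ are precisely the zeros of the holomorphic function $\phi_1'|_L$. By the identity principle, either $\phi_1'|_L\equiv 0$ (and $L$ is entirely vertical, call this ``type V'') or these zeros are isolated (``type N''). If any leaf $L$ is type V, then $\nu\equiv[0:1]$ on $L$ and by minimality $L$ is dense in $X$; continuity of $\nu$ then forces $\nu\equiv[0:1]$ on all of $X$, so every point is vertical.

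Now assume no leaf is type V, and suppose for contradiction that $p=(p_1,p_2)\in V$. The subgroup $H=\{\epsilon\in\CC:\tau_{(0,\epsilon)}(\LL)=\LL\}$ is a closed subgroup of $\CC$ containing the sequence $\epsilon_n\to 0$, hence non-discrete, so either $H=\CC$ or $H=\RR\alpha$ for some $\alpha\in\CC\setminus\{0\}$. Since translations preserve $\LL$ and the tangent direction, $\{p+(0,h):h\in H\}\subset V$, and by closedness of $V$ so is the closure of this set in $\TT^2$. I claim that except in one sub-case, this closure contains (the image of) a complex 1-dim submanifold $C$ through $p$. Indeed, for $H=\CC$ the orbit itself is the full vertical complex line $\{p_1\}\times\CC$ in the cover; for $H=\RR\alpha$ with $\alpha$ ``irrational'' relative to $\Gamma:=\Lambda\cap(\{0\}\times\CC)$ (that is, $\RR\alpha\cap\Gamma=\{0\}$ and $\Gamma+\RR\alpha$ is dense in $\CC$), the closure of the orbit in $\TT^2$ contains the image of the vertical complex line through $p$. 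With such $C\subset V$, the complex 1-dim tangent distribution of $\LL$ agrees with the tangent of $C$ at every point of $C$, so by uniqueness of integral submanifolds $C$ lies in some leaf $L^*$; then $C$ is an open subset of $L^*$ on which $\phi_1'$ vanishes, so by the identity principle $\phi_1'\equiv 0$ on $L^*$, making $L^*$ type V---the desired contradiction.

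The remaining and main obstacle is the sub-case $H=\RR\alpha$ with $n\alpha\in\Gamma$ for some positive integer $n$, where the orbit closure reduces to a real 1-torus $T_1\subset V$ rather than a complex submanifold. Here $\tau_{(0,n\alpha)}$ acts as the identity on $\TT^2$, so the $H$-action descends to a circle action $S^1\curvearrowright X$. The stabilizer of $L_p$ under this action is a closed subgroup of $S^1$, hence either finite or all of $S^1$. If it is all of $S^1$, then $L_p$ is flow-invariant and $T_1\subset L_p$; the holomorphic function $\phi_1'|_{L_p}$ vanishes on the real-analytic curve $T_1$ and hence identically by the identity principle, making $L_p$ type V---contradiction. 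If the stabilizer is finite, the translates $\tau_{(0,t\alpha)}(L_p)$ yield a real-analytic one-parameter family of distinct leaves, each with a vertical point on $T_1$; their plaques in the vertical flow box at $p$ correspond to the real-analytic curve $s_t=f(p_2-t\alpha)$ in the transverse space, with leading behaviour $s_t-p_1\sim \tfrac12 f''(p_2)\alpha^2 t^2$ since $f'(p_2)=0$. The hard part is to show that the existence of this non-trivial real-analytic curve in the transverse space of the minimal lamination is incompatible with the type N hypothesis, thereby closing the argument.
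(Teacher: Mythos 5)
Your first paragraph (the leafwise dichotomy via the identity principle, plus minimality and continuity of the tangent map to globalize) is fine, and it is essentially the implicit wrap-up of the paper's own argument. But the core of your proof does not close. Two problems. First, the step ``by uniqueness of integral submanifolds $C$ lies in some leaf $L^*$'' is not justified for a lamination: the transverse structure is only continuous (Lipschitz at best), so no uniqueness theorem is available, and a complex curve tangent to the lamination at each of its points is not a priori contained in a single leaf --- it could in principle be an envelope of tangencies with a continuum of distinct plaques; ruling that out requires an argument. Second, and decisively, you explicitly leave open the sub-case in which the orbit closure of $p$ under $H$ is only a real circle, and your case division is not even exhaustive: $H=\RR\alpha$ with $\RR\alpha\cap(\Lambda\cap(\{0\}\times\CC))=\{0\}$ but $\RR\alpha+(\Lambda\cap(\{0\}\times\CC))$ not dense in $\CC$ falls into neither of your sub-cases. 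For a generic lattice $\Lambda$ and $H=\RR\alpha$, the orbit closure simply is not a complex curve, so the ``hard part'' you defer is the entire content of the proposition in that situation; the leading-order expansion $s_t-p_1\sim\tfrac{1}{2}f''(p_2)\alpha^2t^2$ by itself produces no contradiction with minimality or with the type-N hypothesis.

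The paper's proof bypasses the orbit-closure analysis entirely and works inside a single vertical flow box around the vertical point $p$, where the plaque through $p$ is a graph $z=f_p(w)$ over the vertical direction with $f_p'$ vanishing at $p$. For $n$ large, $\tau_{(0,\epsilon_n)}(\Gamma_p)$ is again a plaque of the lamination in the same flow box, hence equal to or disjoint from $\Gamma_p$; therefore the holomorphic functions $w\mapsto\bigl(f_p(w)-f_p(w-\epsilon_n)\bigr)/\epsilon_n$ are either identically zero or zero-free, and they converge uniformly to $f_p'$, which has a zero. Hurwitz's theorem then forces $f_p'\equiv 0$, so the plaque is a vertical line segment, and your own first paragraph finishes from there. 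The lesson is that the invariance hypothesis should be exploited through the disjointness of distinct plaques in one flow box (which is exactly what makes Hurwitz applicable), not through the topology of the orbit closure in $\TT^2$.
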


\begin{proof}
Suppose there is a $p=(p_1,p_2)$ with vertical tangent. We could find a vertical flow box, $\psi_p:T_p\times\Delta_\delta\rightarrow \CC^2$ where $\psi_p(z,w)\rightarrow(z+f_z(w),p_2+w)$, with $\pi$ injective in $\mathrm{Im}(\psi_p)$. For $n$ big enough $\Gamma_p^{\epsilon_n}$ should be another plaque on the flow box, hence the transversal distance between $d_z(\Gamma_p,\Gamma_p^{\epsilon_n})=|p+f_p(z)-p-f_p(z-\epsilon_n)|>0$ for every $n$ and for every $z$. But this means that $\frac{f_p(z)-f_p(z-\epsilon_n)}{\epsilon_n}$ has no zeros and this sequence converges uniformly to $f'(z)$, which has a zero. Therefore, by Hurwitz's theorem, $f'(z)=0$ for every $z$.
\end{proof}

Hence, if there are no vertical points, the lamination could be covered only by horizontal flow boxes, and for every point $p$ we get a holomorphic function  by analytic continuation, $f_{L_p}$ such that $\pi(z,f_{L_p}(z)),z\in\CC$ parametrizes $L_p$. Likewise, if every point is vertical, the lamination is holomorphically flat.\par

\begin{proposition}
If $L$ is a leaf of a lamination $(X,\LL)$ embedded on a torus $\TT^2$ and there is a holomorphic function $f_L:\CC\rightarrow \CC$ that parametrizes $L$ by $\pi(z,f_L(z))$, then $f_L$ is linear, so $\bar{L}$ is a holomorphically flat laminated set. 
\end{proposition}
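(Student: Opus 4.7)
My plan is to show that $f_L'$ is a bounded entire function and then invoke Liouville's theorem.

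Under the running assumption of this section that the lamination has no vertical points (the setting in which a global horizontal-graph parametrization $f_L$ is guaranteed to exist, per the paragraph following Proposition~\ref{prop:one}), the compact set $X$ admits a cover by finitely many horizontal flow boxes $U_1,\dots,U_N$ in the sense of Definition~3.1. On each $U_i$ the plaques are graphs $z\mapsto(p_1^{(i)}+z,\,w+f^{(i)}_w(z))$ over $\Delta_\delta$. Since every plaque stays in the compact set $X$, the family $\{f^{(i)}_w:w\in T_{U_i}\}$ is locally uniformly bounded, and Cauchy's estimate yields a uniform bound $M_i$ on $|(f^{(i)}_w)'|$ over $\Delta_{\delta/2}$. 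Set $M:=\max_i M_i$.

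Now fix an arbitrary $z_0\in\CC$. The point $\pi(z_0,f_L(z_0))$ lies in $L\subset X$, hence in some $U_i$, via a lift $(z_0+\lambda_1,f_L(z_0)+\lambda_2)\in U_i$ for some $\lambda=(\lambda_1,\lambda_2)\in\Lambda$. The lift of $L$ through this point is the translated graph $\{(z+\lambda_1,f_L(z)+\lambda_2):z\in\CC\}$, which locally coincides with one of the graphs of $U_i$. A chain-rule computation identifies $f_L'(z_0)$ with the derivative $(f^{(i)}_w)'(\zeta_0)$ at a suitable $\zeta_0\in\Delta_{\delta/2}$ and the corresponding $w$, giving $|f_L'(z_0)|\leq M$. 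Since $z_0$ was arbitrary, $f_L'$ is a bounded entire function on $\CC$, so by Liouville's theorem $f_L'\equiv c$ for some $c\in\CC$, and consequently $f_L(z)=cz+b$.

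Therefore $L$ lifts to the affine complex line $\{(z,cz+b):z\in\CC\}$ in $\CC^2$, and $\bar L$ lifts to a union of $\Lambda$-translates of this line; in particular every leaf of $\bar L$ is an affine complex line in the covering, so $\bar L$ is holomorphically flat. The only mildly delicate step is the identification of $f_L'(z_0)$ with a flow-box derivative through the lattice shift $\lambda$, but this is a direct chain-rule calculation and poses no genuine obstacle; the real content of the proposition is simply that a globally defined horizontal graph along a compact lamination without vertical points has uniformly bounded derivative, which is a coordinate-free restatement of the Liouville principle.
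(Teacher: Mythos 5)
Your overall strategy --- show $f_L'$ is a bounded entire function and apply Liouville --- is exactly the paper's strategy, but there is a genuine gap at the point where you invoke ``the running assumption of this section that the lamination has no vertical points.'' No such blanket assumption is in force: the absence of vertical points is the \emph{conclusion} of the dichotomy in Proposition~\ref{prop:one}, which itself requires minimality and invariance under a sequence of vertical translations, neither of which is a hypothesis of the present proposition. The hypothesis here gives you only that the single leaf $L$ is a global horizontal graph, hence that $L$ itself has no vertical points. Your finite cover of the compact set $X$ (really, of $\bar L$, which is all you need and all you can control) by \emph{horizontal} flow boxes is only possible if $\bar L$ has no vertical points --- at a vertical point a plaque is not a graph over the first coordinate --- and this is precisely what could fail: a priori $f_L'$ might blow up along a sequence $z_n$ whose images accumulate at a vertical point of $\bar L\setminus L$. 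The paper's proof spends its entire first paragraph ruling this out (the set of vertical points is closed, $L$ is dense in $\bar L$ and avoids it, with Hurwitz's theorem controlling the limit plaques), and without that step your uniform Cauchy estimate has nothing to stand on. The fix is to import that argument, or to state the proposition only in the setting where Proposition~\ref{prop:one} has already eliminated vertical points.

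Once that step is supplied, your route to boundedness is a legitimate variant of the paper's: you get a uniform bound on $|f_L'|$ directly from a finite cover by horizontal flow boxes, uniform boundedness of the plaque functions, and Cauchy estimates on a shrunken disk, whereas the paper argues by contradiction, extracting from a sequence with $|f_L'(z_n)|\to\infty$ a convergent subsequence in the compact set $\bar L$ whose unit tangent vectors converge to $(0,1)$, producing a vertical point. Your version is slightly more quantitative and avoids passing to subsequences; the paper's version makes transparent that ``unbounded derivative'' and ``vertical point in the closure'' are the same obstruction, which is why the no-vertical-points step cannot be skipped. Your chain-rule identification of $f_L'(z_0)$ with a flow-box derivative through a lattice shift is fine, as is the concluding observation that $f_L(z)=cz+b$ forces every leaf of $\bar L$ to be an affine line (though note $\pi^{-1}(\bar L)$ is the \emph{closure} of the union of $\Lambda$-translates of the line, not just that union).
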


\begin{proof}
Firstly, every leaf in $\bar{L}$ is a horizontal graph, because the set of vertical points is a closed set (by Hurwitz's theorem again), and $L$ is dense on $\bar{L}$ without vertical points. So, there is no vertical point in $\bar{L}$. \par

Assuming $f'_L$ is not constant, then there is a sequence $z_n$ with $|f'_L(z_n)|\rightarrow\infty$. But $\pi(z_n,f_L(z_n))$  has a convergent subsequence in $\LL$, $\pi(z_{n_k},f_L(z_{n_k}))\rightarrow (z_0,f(z_0))\in\TT^2$ and the unitary tangent in each point $\pi(z_{n_k},f_L(z_{n_k}))$ is
$\frac{(1,f'(z_{n_k}))}{\sqrt{1+\|f'(z_{n_k})\|}}$. Therefore, this should also converge, and it does, to $(0,1)$, so there would be a vertical point in the lamination. 
This contradiction arises from the fact that $f'_L$ was supposed unbounded. So if it is bounded, by Liouville's theorem, it should be constant. Therefore, $\LL$ is a holomorphically flat lamination.
\end{proof}

Note that all the leaves of holomorphically flat laminations are parabolic, so there is a directed closed current. A lamination on a torus has no invariant complex segments if and only if every leaf  has horizontal and vertical points. Then, in this case, the connected component of the identity of the group of automorphisms of the lamination is  $Aut_0{\LL}=\{id\}$.
\begin{lemma}[Fornæss-Sibony\cite{FS1}]\label{lemma:fs}
\begin{enumerate}
\item There is a constant $0<c_0<1$ such that, for every holomorphic function on the disk $g$ with $|g|<1$, if $g$ has $N$ zeros in $\DD_{1/2}$ then $|g|<c_0^N$ on $\DD_{1/2}$.
\item Let $g$ a holomorphic function on the disk with $|g|<1$. Then, if $|g|<\eta<1$ on $\DD_{1/4}$, then $|g|<\eta^{1/2}$ on $\DD_{1/2}$.
\end{enumerate}
\end{lemma}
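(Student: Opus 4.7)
Both parts are classical consequences of Schwarz-type principles, so my plan is just to identify the right normalization in each case.

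For part (1), the natural approach is to factor out the zeros with a Blaschke product. Let $a_1,\dots,a_N$ be the zeros of $g$ in $\DD_{1/2}$ and set
\[
B(z)=\prod_{j=1}^N\frac{z-a_j}{1-\overline{a_j}z},
\]
which is holomorphic on $\DD$ with $|B|\le 1$. Then $h=g/B$ extends to a holomorphic function on $\DD$ with $|h|\le 1$ (by removable singularities and the maximum principle, using $|g|<1$). Hence $|g|\le|B|$ everywhere on $\DD$, and the task reduces to bounding a single Blaschke factor $\varphi_a(z)=(z-a)/(1-\bar a z)$ on $\DD_{1/2}$ when $a\in\DD_{1/2}$. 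For $|z|,|a|\le 1/2$ one has $|z-a|\le 1$ and $|1-\bar a z|\ge 3/4$; optimizing (the extremum is $a=1/2$, $z=-1/2$) gives $|\varphi_a(z)|\le 4/5$. Taking $c_0=4/5$ yields $|g|\le|B|\le c_0^N$ on $\DD_{1/2}$.

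For part (2), I would apply the Hadamard three-circles theorem to $g$. Writing $M(r)=\sup_{|z|=r}|g(z)|$, the function $r\mapsto \log M(r)$ is convex in $\log r$. Choosing radii $r_1=1/4$, $r=1/2$, $r_2\nearrow 1$, the convexity inequality reads
\[
\log M(1/2)\le \tfrac12\log M(1/4)+\tfrac12\log M(r_2).
\]
Since $|g|<1$ on $\DD$ forces $M(r_2)\le 1$ for every $r_2<1$, and since $M(1/4)\le \eta$ by hypothesis, passing $r_2\to 1$ gives $\log M(1/2)\le \tfrac12\log\eta$, i.e.\ $|g|\le\eta^{1/2}$ on $\DD_{1/2}$.

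There is essentially no obstacle: both statements follow from textbook tools. The only tiny subtlety is that the hypothesis $|g|<1$ on the open disk does not immediately give a boundary bound, but one avoids this either by using $M(r)\le 1$ for $r<1$ and taking a limit in the three-circles inequality (part 2), or by invoking the maximum principle on $\overline{\DD}_{r}$ for $r<1$ and then $r\to 1$ (part 1). The constants $c_0=4/5$ in (1) and the exponent $1/2$ in (2) are the sharp outputs of the chosen radii $1/4$, $1/2$, $1$.
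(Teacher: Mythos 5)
Your argument is correct. Note that the paper itself offers no proof of this lemma: it is quoted verbatim from Forn\ae ss--Sibony \cite{FS1}, so there is nothing internal to compare against; your write-up is, however, essentially the standard proof and the one used in \cite{FS1} (divide out the zeros by a finite Blaschke product for (1); subharmonicity of $\log|g|$ together with harmonic measure of the annulus $\{1/4<|z|<1\}$ -- equivalently Hadamard three circles -- for (2)). One presentational point in (1): the two separate estimates $|z-a|\le 1$ and $|1-\bar a z|\ge 3/4$ only give the useless bound $4/3$, so they should be dropped; the correct statement is that the \emph{joint} supremum of the pseudo-hyperbolic distance $\rho(z,a)=|z-a|/|1-\bar a z|$ over $|z|,|a|\le 1/2$ equals $2r/(1+r^2)|_{r=1/2}=4/5$, which follows cleanly from the identity $1-\rho(z,a)^2=(1-|z|^2)(1-|a|^2)/|1-\bar a z|^2$ (numerator minimized and denominator maximized simultaneously at $z=-a$, $|a|=1/2$). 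With that justification supplied, $c_0=4/5$ works, and in (2) your limiting form of the three-circles inequality as $r_2\nearrow 1$ (using $M(r_2)\le 1$) is valid since the interpolation weights converge to $\tfrac12,\tfrac12$.
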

We will use this lemma in the proofs of the theorems to estimate the transversal distances.
\begin{theorem}\label{theo:one}
If $(X,\LL)$ is a Lipschitz lamination in $\TT^2$ without invariant complex line segments, we can find a covering by flow boxes $\mathcal{U}$ of $\LL$ and $N\in\NN$ such that, if $\Gamma_\alpha,\Gamma_\beta$ are plaques from the same flow box, then $Card(\Gamma_\alpha\cap\Gamma_\beta^\epsilon)<N$, where $\Gamma_\beta^\epsilon$ is $\Gamma_\beta$ moved by a horizontal or a vertical translation.
\end{theorem}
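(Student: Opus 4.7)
My plan is to argue by contradiction. Cover $X$ by finitely many flow boxes: at each point of $X$ the leaf tangent is one-dimensional, so either a horizontal or vertical chart is available. Suppose the conclusion fails, so for every $N\in\NN$ there exist plaques $\Gamma_{\alpha_N},\Gamma_{\beta_N}$ in a common flow box of the cover and a horizontal or vertical translation $\epsilon_N$ with $\mathrm{Card}(\Gamma_{\alpha_N}\cap\Gamma_{\beta_N}^{\epsilon_N})\ge N$. Passing to a subsequence I fix one flow box $U$ (say horizontal, parametrized by $\psi_U(z,w)=(p_1+z,w+f_w(z))$ with $f_w(0)=0$) and one translation type (say horizontal with magnitudes $\epsilon_n$). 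Writing the plaques as graphs $G_{w_{\alpha_n}}(z)=w_{\alpha_n}+f_{w_{\alpha_n}}(z)$, the intersections with the translated plaque correspond precisely to the zeros of
$$h_n(z)\;=\;G_{w_{\alpha_n}}(z)\;-\;G_{w_{\beta_n}}(z-\epsilon_n),$$
which are uniformly bounded holomorphic functions on $\Delta_\delta$.

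Rescaling so that $|h_n|\le 1$, Lemma \ref{lemma:fs} forces $h_n\to 0$ uniformly on $\Delta_{\delta/4}$ as the zero count tends to infinity. By compactness of $T_U$ and of the translation parameter, combined with the Lipschitz regularity of $w\mapsto f_w$, I extract a further subsequence with $w_{\alpha_n}\to w^\infty_\alpha$, $w_{\beta_n}\to w^\infty_\beta$, $\epsilon_n\to\epsilon_\infty$, and $G_{w_{\alpha_n}},G_{w_{\beta_n}}$ converging uniformly on $\Delta_{\delta/4}$. The vanishing limit $h_\infty\equiv 0$ becomes
$$G_{w^\infty_\alpha}(z)\;=\;G_{w^\infty_\beta}(z-\epsilon_\infty),$$
so the translation $\tau_{(\epsilon_\infty,0)}$ sends the limit plaque $\Gamma_{w^\infty_\beta}$ onto $\Gamma_{w^\infty_\alpha}$.

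To reach a contradiction I renormalize by $r_n:=|w_{\alpha_n}-w^\infty_\alpha|+|w_{\beta_n}-w^\infty_\beta|+|\epsilon_n-\epsilon_\infty|>0$. Lipschitzness yields the uniform bound $|h_n/r_n|\le C$ on $\Delta_{\delta/4}$, and since $h_n/r_n$ retains the zeros of $h_n$, Montel together with Hurwitz force every subsequential limit $\tilde h_\infty$ to vanish identically. Decomposing the normalized quotient into its translation and transversal parts and taking limits along a further subsequence in which the ratios $\epsilon_n/r_n\to\lambda$ and $(w_{\alpha_n}-w^\infty_\alpha)/r_n$, $(w_{\beta_n}-w^\infty_\beta)/r_n$ converge, the identity $\tilde h_\infty\equiv 0$ produces a first-order relation linking the tangential derivative $G'_{w^\infty}$ to a holomorphic limit of normalized transversal differences. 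Evaluating at $z=0$ (using $f_w(0)=0$) eliminates the constant terms and forces the graph of $G_{w^\infty}$ to coincide with a horizontal affine complex line in $\CC^2$. Since this line lies in $\pi^{-1}(X)$, it is an invariant complex line segment, contradicting the hypothesis.

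The main obstacle is extracting from the renormalized vanishing identity the conclusion that the limit plaque is affine: Lipschitzness is needed both to obtain the Montel bound on $h_n/r_n$ and to interpret the renormalized transversal differences as a bounded holomorphic object that can be played off against $G'_{w^\infty}$. Without Lipschitzness the renormalization step breaks down, which explains why the non-Lipschitz case yields only the weaker results mentioned in the abstract.
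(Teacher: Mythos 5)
Your reduction of the intersection points to zeros of $h_n$ and your use of Lemma \ref{lemma:fs} to force $h_n/\sup|h_n|\to 0$ are fine, but the argument breaks at the renormalization step, and more fundamentally it never brings the hypothesis of ``no invariant complex line segments'' to bear in a usable form. The identity you extract from $\tilde h_\infty\equiv 0$ has the shape $u(z)-v(z-\epsilon_\infty)=\lambda\,G'_{w^\infty_\beta}(z-\epsilon_\infty)$, where $u,v$ are subsequential limits of the normalized transversal differences $(G_{w_{\alpha_n}}-G_{w^\infty_\alpha})/r_n$ and $(G_{w_{\beta_n}}-G_{w^\infty_\beta})/r_n$, and $\lambda=\lim(\epsilon_n-\epsilon_\infty)/r_n$. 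For a general Lipschitz lamination $u$ and $v$ are merely some bounded holomorphic functions on the disk (and $\lambda$ may well be $0$, since $r_n$ dominates $|\epsilon_n-\epsilon_\infty|$ by construction); nothing in this identity forces $G''_{w^\infty}\equiv 0$, so the claimed affine limit plaque does not follow --- evaluating at $z=0$ only removes constants, it does not produce a differential equation for $G_{w^\infty}$. You also leave the case $\epsilon_\infty\neq 0$ unresolved: there the unrenormalized limit only says that one fixed translation carries one limit plaque onto another inside one flow box, which is perfectly compatible with the hypotheses.

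The paper's proof is genuinely global where yours is local. It uses the absence of invariant segments to guarantee that every leaf has both vertical and horizontal points, and builds \emph{special} vertical (resp.\ horizontal) flow boxes around them in which $f'_\alpha$ has finitely many but at least one zero, so that by Hurwitz a plaque and its small vertical (resp.\ horizontal) translate are \emph{forced} to intersect there. Then, given $N'$ intersection points of $\Gamma_\alpha$ and $\Gamma_\beta^\epsilon$ in an arbitrary flow box, Lemma \ref{lemma:fs} makes $\Gamma_\alpha$ and $\Gamma_\beta^\epsilon$ roughly $c^{N'}|\epsilon|$-close while Lipschitzness keeps $d_z(\Gamma_\alpha,\Gamma_\beta)\asymp|\epsilon|$; both estimates are propagated along a leafwise path, with bounded distortion $b^M$ and repeated use of part (2) of the lemma, to a special flow box, where the triangle inequality shows that $\Gamma_{\beta_0}$ and $\Gamma_{\beta_0}^\epsilon$ stay at positive transversal distance once $N'$ exceeds a threshold --- contradicting the forced intersection there. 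To repair your argument you would need to replace the single-flow-box renormalization by some version of this propagation to a region where the translation direction is genuinely transverse to the plaques; as written the proof does not go through.
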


\begin{proof}
Since $\LL$ has no complex segments, every leaf has vertical and horizontal points. For every vertical point $p_v$, we can take a relatively compact vertical flow box $T_{p_v}\times\Delta_\delta$ such that $f'_\alpha$ has finite number $K_{p_v}$ of zeros on $\Delta_{\delta/2}$ for every $\alpha\in T_{p_v}$ because the lamination has no complex invariant line segments. These flow boxes will be called special vertical flow boxes. Since the set of the vertical points is closed, there is a finite subcovering. We make an analogous argument for horizontal points, and, in this way, collecting all the flow boxes we obtain an open set of the lamination relatively compact, and the complement can be covered by polydisks which can be seen as horizontal or vertical flow boxes for our convenience.\par

Using Hurwitz's theorem like we did in the proof of proposition 3.1, we can find a refinement of this covering, a $k_v\in \NN$ and a $\epsilon_v>0$ small enough such that for every $\alpha,\beta$ in the transversal of a special vertical flow box ($\alpha$ and $\beta$ can be the same point), $f_{\beta}(z-\epsilon)-f_{\alpha}(z)$ has $0<k<k_v$ zeros on $\Delta_\delta$ for every $0<\epsilon<\epsilon_v$. This means that under small vertical translations, plaques in special vertical flow boxes intersect each other at $k$ points.\par

After another refinement, we can assure the same is true for other $\epsilon_h$ and $k_h$ in special horizontal flow boxes. So let be $\epsilon_0=\min(\epsilon_h,\epsilon_v)$ and $k=\max\{k_h,k_v\}$.\par
 
For $\epsilon<\epsilon_0$, $\tau_{(0,\epsilon)}(z,w)=(z,w+\epsilon)$ is a vertical translation, and we suppose we have a horizontal flow box where we have $N'$ intersection points between two plaques, $\Gamma_\alpha,\Gamma_\beta$ when we move one of them by the translation $\Gamma_\beta^\epsilon=\tau_{(0,\epsilon)}(\Gamma_\beta)$. In this case, the transversal distance defined on every $z\in\Delta_\delta$ is
$d_z(\Gamma_\beta,\Gamma_\alpha)=|\alpha+f_\alpha(z)-\beta-f_\beta(z)|$, and as $\LL$ is a Lipschitz lamination, we have $$\frac{|\alpha-\beta|}{C}<d_z(\Gamma_\alpha,\Gamma_\beta)<C|\alpha-\beta|$$ 
for certain global constant $C$ independent of the flow box.
 Since $\Gamma_\alpha$ and $\Gamma_\beta^\epsilon$ intersect, there is $z_0$ with $d_{z_0}(\Gamma_\alpha,\Gamma_\beta)=\epsilon$. Hence $$\frac{\epsilon}{C^2}<d_z(\Gamma_\alpha,\Gamma_\beta)<C^2\epsilon.$$

There is also a constant $b>1$ holding the following: if $\Gamma_1$ and $\Gamma_2$ are two plaques in a flow box with $d_z(\Gamma_1,\Gamma_2)$, the transversal distance on it, and $\Gamma'_1,\Gamma_2'$ are their continuations to an adjacent flow box with the transversal distance $d_z'(\Gamma_1,\Gamma_2)$ then 
$$\frac{\min {d'_z(\Gamma'_1,\Gamma'_2)}}{b}\leq\min {d_z(\Gamma_1,\Gamma_2)}\leq\max{d_z(\Gamma_1,\Gamma_2)}\leq b\max{d'_z(\Gamma'_1,\Gamma'_2)}.$$
This $b$ does not depend on neither the flow box nor the plaques.\par

As we have a finite covering, we can reach a special vertical flow box following a path with at most $M$ changes of flow boxes where $M$ is a global bound. Hence, we get  $$\frac{|\epsilon|}{C^2b^M}<d_z(\Gamma_{\alpha_0},\Gamma_{\beta_0})<C^{2}b^M|\epsilon|$$
where $\alpha_0$ and $\beta_0$ are the analytic continuation of the plaques. \par

Due to the Lipschitzness of the lamination, we can find a global constant $K'$ such that, for every flow box continuating $\Gamma_\alpha$ and $\Gamma_\beta$, say $\Gamma_{\alpha'}, \Gamma_{\beta'}$ we have
$$\frac{d_z(\Gamma_{\alpha'},\Gamma_{\beta'})}{K'|\epsilon|}<\frac{1}{b^2}.$$
By Lemma \ref{lemma:fs}, there is $c<1$ such that
$$\frac{d_z(\Gamma_{\alpha},\Gamma_{\beta}^\epsilon)}{K'|\epsilon|}<c^{N'}<\frac{1}{b^2},$$
then we can see this transversal distance in the next plaques, and considering the distortion, it satisfies that
$$\frac{d'_z(\Gamma_{\alpha'},\Gamma_{\beta'}^\epsilon)}{K'|\epsilon|}<bc^{N'}<1.$$
Hence, in a bigger disk, by Lemma \ref{lemma:fs}, they would differ at most by $(bc^{N'})^{1/2}$. Repeating the argument until we arrive to the vertical special flow box, we get that $d'_z(\Gamma_{\alpha_0},\Gamma_{\beta_0}^\epsilon)<K'|\epsilon|b^2 c^{N'/{2}^M}$.

So, we should have that, by triangular inequality, 
$$d_z(\Gamma_{\beta_0},\Gamma_{\beta_0}^\epsilon)\geq d_z(\Gamma_{\alpha_0},\Gamma_{\beta_0})-d_z(\Gamma_{\alpha_0},\Gamma_{\beta_0}^\epsilon)\geq\left(\frac{1}{C^2b^M}-K'b^2 c^{N'/{2}^M}\right)|\epsilon| $$
but, if $N'$ is big enough to make $\frac{1}{C^2b^M}>K'|\epsilon|b^2 c^{N'/{2}^M}$. It would mean that 
$\Gamma_{\beta'},\Gamma_{\beta'}^\epsilon$ should not intersect each other, but they do. Therefore, making $N=\max\{N',k\}$, we obtain the $N$ appearing in the statement for vertical translations.\par

This argument can be made analogously for horizontal translations.
\end{proof}

\begin{theorem}\label{theo:three}
Let $(X,\LL)$ be a lamination by Riemann surfaces in $\TT^2$ without invariant complex line segments. There exist $\epsilon_0>0$, $A>0$ and a covering $\{B_i\}$ of the lamination, such that for every $\epsilon\in\CC$ with $|\epsilon|<\epsilon_0$, for every $\tau_{(\epsilon,0)}$ and $\tau_{(0,\epsilon)}$,there are at most $A\log{1/|\epsilon|}$ crossing points between $L_1$ and $L_2^\epsilon$ in any flow box.
\end{theorem}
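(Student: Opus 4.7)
The approach closely mirrors that of Theorem \ref{theo:one}, with one crucial modification: absent the Lipschitz hypothesis, one cannot assert a linear-in-$|\epsilon|$ lower bound on the transversal distance, but only a polynomial one, and this weakening is precisely what produces the $\log(1/|\epsilon|)$ factor in the final estimate. I reuse the covering of $\LL$ constructed in the proof of Theorem \ref{theo:one}: finitely many special vertical flow boxes (in which $f'_\alpha$ has at most $K_{p_v}$ zeros on $\Delta_{\delta/2}$ for every plaque), finitely many special horizontal flow boxes, and generic flow boxes filling the rest. Let $M$ be the uniform bound on the number of flow box transitions needed to reach a special flow box.

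Fix a flow box and, for concreteness, a vertical translation $\tau_{(0,\epsilon)}$ (the horizontal case is symmetric). Writing the plaques as horizontal graphs $\Gamma_\alpha, \Gamma_\beta$, crossings with $\Gamma_\beta^\epsilon$ in $\Delta_{\delta/2}$ are zeros of the holomorphic function
\[
g_\epsilon(z) = f_\alpha(z) - f_\beta(z) - (\beta - \alpha + \epsilon);
\]
call this count $N'$. Since $g_\epsilon$ is uniformly bounded by some $M_0$ on $\Delta_\delta$ (by compactness), Lemma \ref{lemma:fs}(1) applied to $g_\epsilon / M_0$ yields $|g_\epsilon| \le M_0\, c_0^{N'}$ on $\Delta_{\delta/2}$. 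I propagate this smallness to a special flow box along the $M$ transitions, alternating Lemma \ref{lemma:fs}(2) with the change-of-parametrization between adjacent flow boxes; holomorphicity of the plaques makes this change of variables holomorphic with bounded distortion on compact overlaps (Cauchy estimates), so after $M$ steps one obtains $|g_\epsilon(w)| \le M_1\, c_0^{N'/2^M}$ on $\Delta_{\delta/2}$ in the special flow box.

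The heart of the argument is to replace the Lipschitz estimate $d_z \ge |\epsilon|/(C^2 b^M)$ of Theorem \ref{theo:one} with a polynomial-in-$|\epsilon|$ lower bound. Precisely, I would show that there exist uniform constants $c_2 > 0$ and $K \ge 1$ (depending only on the covering and on $K_{p_v}$) and some $w_* \in \Delta_{\delta/2}$ of the special vertical flow box for which
\[
d_{w_*}(\Gamma_{\beta_0}, \Gamma_{\beta_0}^\epsilon) = |f_{\beta_0}(w_*) - f_{\beta_0}(w_* - \epsilon)| \ge c_2 |\epsilon|^K.
\]
Since $f'_{\beta_0}$ has at most $K_{p_v}$ zeros counted with multiplicity on $\Delta_{\delta/2}$, the vanishing order of $f_{\beta_0}(w) - f_{\beta_0}(w-\epsilon)$ at a critical point is at most $K_{p_v}+1$, which controls $K$ uniformly; one then chains through the triangle inequality $d_{w_*}(\Gamma_{\beta_0},\Gamma_{\beta_0}^\epsilon) \le d_{w_*}(\Gamma_{\alpha_0},\Gamma_{\beta_0}) + d_{w_*}(\Gamma_{\alpha_0},\Gamma_{\beta_0}^\epsilon)$, exactly as in Theorem \ref{theo:one}, to transfer this into a lower bound on the relevant distance between the two original plaques after propagation. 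Combining with the propagated upper bound gives
\[
c_2 |\epsilon|^K \le M_1\, c_0^{N'/2^M} + (\text{lower-order terms}),
\]
from which $N' \le A \log(1/|\epsilon|) + B$ with uniform $A, B$; choosing $\epsilon_0$ small absorbs the additive constant.

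The main obstacle is, unsurprisingly, Step 3: establishing the polynomial lower bound uniformly across the lamination and across pairs of plaques. In the Lipschitz case it is essentially built into the hypothesis; without Lipschitz it must be extracted from the holomorphic structure of the plaques, the no-complex-segments condition, and the bounded critical-point count in the special flow box, while carefully tracking how the intrinsic quantity $|\epsilon|$ (given by the translation) relates to the a priori uncontrolled quantity $|\alpha - \beta|$ after propagation. A secondary point to verify is that the $2^M$ exponent degradation from iterating Lemma \ref{lemma:fs}(2) interacts cleanly with the holomorphic change-of-coordinate distortion, but since plaques are holomorphic graphs this is routine.
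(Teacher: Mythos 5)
There is a genuine gap, and it sits exactly where you flag ``the main obstacle'': your Step 3 is not only unresolved but aimed at the wrong quantity. You propose to extract a polynomial lower bound from the bounded critical-point count of $f'_{\beta_0}$, which controls the vanishing order of $f_{\beta_0}(w)-f_{\beta_0}(w-\epsilon)$, i.e.\ the distance from a plaque to \emph{its own} translate. But the contradiction in this scheme (as in Theorem \ref{theo:one}) is obtained by writing $d_z(\Gamma_{\beta'},\Gamma_{\beta'}^\epsilon)\geq d_z(\Gamma_{\alpha'},\Gamma_{\beta'})-d_z(\Gamma_{\alpha'},\Gamma_{\beta'}^\epsilon)>0$ and contradicting the forced intersection of $\Gamma_{\beta'}$ with $\Gamma_{\beta'}^\epsilon$ in the special flow box; what must be bounded below, uniformly in $z$ and stably under the $M$ flow-box transitions, is the distance $d_z(\Gamma_{\alpha},\Gamma_{\beta})$ between two \emph{distinct} plaques, knowing only that this distance equals $|\epsilon|$ at a single crossing point. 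That is a statement about the modulus of continuity of the transverse holonomy, which is precisely what the Lipschitz hypothesis provided in Theorem \ref{theo:one} and what the critical-point count of a single leaf cannot provide: a priori the holonomy could crush $d_z$ arbitrarily between one value of $z$ and another, or between one flow box and the next. Your remark that the change of flow box is ``holomorphic with bounded distortion by Cauchy estimates'' concerns distortion in the leaf variable $z$, not in the transversal variable $\alpha$, where the dependence is only continuous a priori. (Your triangle inequality is also arranged in the direction that yields a lower bound on $d(\Gamma_{\alpha_0},\Gamma_{\beta_0})$ rather than a positivity statement contradicting an intersection, so even granting Step 3 the conclusion does not follow as written.)

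The ingredient the paper uses instead is that the transversal holonomy of a lamination by holomorphic graphs is a holomorphic motion of the transversal, hence (by the $\lambda$-lemma) H\"older with uniform exponents on slightly smaller disks: one may choose the flow boxes so that
\begin{equation*}
\frac{|\alpha-\beta|^2}{C}\leq |\alpha+f_\alpha(z)-\beta-f_\beta(z)|\leq C|\alpha-\beta|^{1/2}.
\end{equation*}
From $d_{z_0}=|\epsilon|$ at a crossing point this gives $\frac{|\epsilon|^4}{K}\leq d_z(\Gamma_\alpha,\Gamma_\beta)\leq K|\epsilon|^{1/4}$ in the original box, and after $M$ transitions $\frac{|\epsilon|^{4^M}}{K^Mb^M}\leq d_z(\Gamma_{\alpha'},\Gamma_{\beta'})$. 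Comparing this polynomial-in-$|\epsilon|$ lower bound with the propagated upper bound $b^2c^{N/2^M}K|\epsilon|^{1/4^M}$ coming from Lemma \ref{lemma:fs} is exactly what forces $N\leq A\log(1/|\epsilon|)+B$. So the skeleton of your argument (same covering, same zero-counting, same propagation, weaker lower bound producing the logarithm) is correct, but the missing lemma is the H\"older continuity of holomorphic motions, not a bound on the multiplicity of critical points of the leaves.
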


\begin{proof}
The proof is similar to the previous one, but the estimates are slightly different. We will try to be consistent with the notation of the previous theorem. Here, since the lamination is a holomorphic motion, we can take horizontal and vertical flow boxes as we said before, such that 
$$\frac{|\alpha-\beta|^2}{C}\leq |\alpha+f_\alpha(z)-\beta-f_\beta(z)| \leq C|\alpha-\beta|^2.$$
By previous arguments, we can also consider a covering by flow boxes as in the previous theorem, where these inequalities hold for transversal distances, and taking $\epsilon_0$ small enough to assure that a plaque on a special horizontal flow box and the same plaque moved by a horizontal translation have to intersect each other.\par

Proceeding as before, we can check what happens for $\tau_{(\epsilon,0)}$. Assume that we have $N$ crossing points on a vertical flow box. By previous arguments,
$$\frac{\epsilon^4}{K}\leq d_z(\Gamma_\alpha,\Gamma_\beta)\leq K|\epsilon|^{1/4}$$
for certain $K>2$ non depending on $\epsilon$.So, we can reach a special horizontal flow box by a path in at most $M$ changes of flow boxes and $\alpha'$ and $\beta'$ are the corresponding plaques in this flow box. Hence 
$$
\frac{|\epsilon|^{4^M}}{K^Mb^M}<d_z(\Gamma_\alpha',\Gamma_\beta')<b^MK^M|\epsilon|^{1/4^M}.
$$

By similar arguments, we can find a constant $c$ verifiying the estimate $$\frac{d_z(\Gamma_\alpha,\Gamma_\beta^\epsilon)}{K|\epsilon|^{1/4^M}}<c^N<\frac{1}{b^2}.$$
So, by the same reason than for the Lipschitz case,
$$d_z(\Gamma_\alpha',\Gamma_\beta'^\epsilon)<b^2 c^{N/2^M}K|\epsilon|^{1/4^M}.$$
But, by triangular inequality again,
$$d_z(\Gamma_\beta',\Gamma_\beta'^\epsilon)> d_z(\Gamma_\alpha',\Gamma_\beta')-d_z(\Gamma_\alpha',\Gamma_\beta'^\epsilon)>\frac{|\epsilon|^{4^M}}{K^Mb^M}-b^2 c^{N/2^M}K|\epsilon|^{1/4^M}$$
and if 

$$N>\frac{(4^M-(1/4)^M)\log|\epsilon|}{1/2^M\log c}-\frac{\log (2b^{M+2}K^{M+1})}{1/2^M\log c}=A\log{\frac{1}{|\epsilon|}}+B$$

then $d_z(\Gamma_\beta',\Gamma_\beta'^\epsilon)>\frac{|\epsilon|^{4^M}}{2K^Mb^M}>0$, hence $\Gamma_\beta',\Gamma_\beta'^\epsilon$ would not intersect each other. The contradiction arises if $N$ is too big compared to $-\log|\epsilon|$
\end{proof}

This last bound is not as good as the one of Lipschitz case, and we need to recall from \cite{FS1} the following definition.

\begin{definition}A harmonic directed current $T$ which can be written in flow boxes as $T=\int h_\alpha[V_\alpha]d\mu(\alpha)$, has finite transverse energy if in some local flow box $$\int\log|\alpha-\beta|d\mu(\alpha)d\mu(\beta)<-\infty$$
\end{definition}

\begin{theorem}\label{theo:dos} Let $(X,\LL)$ be a lamination in $\TT^2$ without invariant complex line segments. For every harmonic directed current $T$ of mass one with finite transverse energy, $Q(T,T)=0$. This is also true if the lamination is transversally Lipschitz without the assumption of the finiteness of the transversal energy.
\end{theorem}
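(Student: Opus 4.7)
The plan is to adapt the regularization argument of \cite{FS1} to the torus setting, with the translation group playing the role of the automorphism group of $\CP^2$ used there. The argument proceeds in four movements: regularize $T$ by averaging over small translations, pair the smooth regularization with $T$ via the inner product on $H_e$, bound the pairing geometrically by the crossing estimates from Theorems \ref{theo:one} and \ref{theo:three}, and pass to the limit to force $Q(T,T)\leq 0$; combined with $Q(T,T)\geq 0$ for positive harmonic $T$, this yields equality.

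First I would fix a smooth nonnegative mollifier $\rho$ on $\CC^2$ of integral one with small support, set $\rho_r(\epsilon)=r^{-4}\rho(\epsilon/r)$, and define
$$T_r := \int_{\CC^2} (\tau_\epsilon)_* T \,\rho_r(\epsilon)\,d\lambda(\epsilon),$$
a smooth positive harmonic $(1,1)$-form on $\TT^2$. Since $\square$-harmonic $(1,1)$-forms on the flat torus are constant forms, hence translation invariant, the decomposition $T=\Omega+\parbar S+\partial\overline{S}$ transports cleanly through each $\tau_\epsilon$: the $\Omega$-component is unchanged and the $(0,1)$-component becomes $\tau_\epsilon^*S$. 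Consequently $[T_r]=[T]$ in $H_e$, $T_r\to T$ in $H_e$ as $r\to 0$, and the continuity of $Q$ gives $Q(T,T_r)\to Q(T,T)$.

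Second, the smoothness of $T_r$ allows a geometric expansion. By Fubini,
$$Q(T,T_r) \;=\; \int_{\CC^2} I(\epsilon)\,\rho_r(\epsilon)\,d\lambda(\epsilon)\;+\;\text{(energy correction)}, \qquad I(\epsilon):=\int_{\TT^2} T\wedge(\tau_\epsilon)_* T,$$
where the correction gathers the $\parbar S$-terms through the inner product. Flow-box by flow-box, with the local decomposition $T=\int h_\alpha[V_\alpha]\,d\mu(\alpha)$, the inner wedge is the atomic positive measure
$$T\wedge(\tau_\epsilon)_*T \;=\; \int \sum_{p\in V_\alpha\cap V_\beta^\epsilon} h_\alpha(p)h_\beta(p)\,\delta_p\,d\mu(\alpha)\,d\mu(\beta),$$
whose total mass is controlled by $\mathrm{Card}(V_\alpha\cap V_\beta^\epsilon)$ times $\int h_\alpha h_\beta\,d\mu\,d\mu$. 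Applying Theorem \ref{theo:one} in the Lipschitz case yields $I(\epsilon)\leq C N$ uniformly for $|\epsilon|<\epsilon_0$; applying Theorem \ref{theo:three} in the general case gives $I(\epsilon)\leq C A\log(1/|\epsilon|)$, and the finite transverse energy hypothesis $\int\log|\alpha-\beta|\,d\mu(\alpha)\,d\mu(\beta)>-\infty$ keeps $\int I(\epsilon)\rho_r(\epsilon)\,d\lambda(\epsilon)$ bounded uniformly in $r$.

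The main obstacle is the last identification: turning the uniform (or mildly logarithmic) bound on $I(\epsilon)$ into the equality $Q(T,T)=0$, rather than mere boundedness. This requires the full Forn\ae ss--Sibony bookkeeping of the $\Omega$ and $\parbar S$ components of both $T$ and $T_r$, so that the cohomological term $\int\Omega_T\wedge\Omega_T$ is exactly cancelled by the energy $2E(T)$ in the limit, leaving only the geometric self-intersection, which the crossing bounds show to vanish after regularization. In the torus setting this bookkeeping is easier than in $\CP^2$ because harmonic forms are constant and translations are isometries, but care is still needed with the Stokes-type integrations by parts because $T$ itself is not smooth; the role of the finite-transverse-energy assumption in the general case is precisely to guarantee that the correction terms remain integrable as $r\to 0$.
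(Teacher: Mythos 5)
Your overall architecture is the same as the paper's: write $T$ locally as $\int h_\alpha[V_\alpha]\,d\mu(\alpha)$, estimate the geometric wedge product $T\wedge_g\tau_\epsilon^*T$ using the crossing bounds of Theorems \ref{theo:one} and \ref{theo:three}, and pass to smoothings obtained by averaging over small translations. But there is a genuine gap at exactly the point you flag as ``the main obstacle,'' and it is not resolved the way you suggest. You only establish that $I(\epsilon)=\int T\wedge\tau_\epsilon^*T$ is \emph{bounded} (by $CN$, or by $CA\log(1/|\epsilon|)$), and boundedness gives nothing: you need $I(\epsilon)\to 0$. The missing idea is the localization of the geometric intersection near the diagonal of the transversal. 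Two plaques $V_\alpha$ and $V_\beta^\epsilon$ can only intersect if $d_{\min}(V_\alpha,V_\beta)\leq C|\epsilon|$, so the double integral defining $T\wedge_g\tau_\epsilon^*T$ is supported on the set $\{(\alpha,\beta):d_{\min}(V_\alpha,V_\beta)\leq C|\epsilon|\}$, whose $\mu\times\mu$-measure tends to $0$ as $\epsilon\to 0$ because $\mu$ has no atoms. In the Lipschitz case this immediately gives $N\cdot(\mu\times\mu)\{d_{\min}\leq C|\epsilon|\}\to 0$. In the finite-transverse-energy case one dominates $A\log(1/|\epsilon|)$ by $C'\log\bigl(1/d(V_\alpha,V_\beta)\bigr)$ on that same set and invokes the integrability of $\log(1/|\alpha-\beta|)$ together with dominated convergence over the shrinking support. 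This is where the finite-energy hypothesis is actually used --- not, as you propose, to ``keep correction terms integrable as $r\to 0$.''

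Your suggested repair --- an exact cancellation of the cohomological term $\int\Omega\wedge\Omega$ against $2E(T)$ --- is not how the argument closes and would not work as stated, since nothing forces those two quantities to coincide a priori (their difference is precisely $Q(T,T)$, the thing to be computed). The paper instead regularizes \emph{both} factors, $T^\delta$ and $T^{\delta'}_\epsilon$, as averages over translations in a neighbourhood $U(\epsilon)$ of the identity of $Aut_0(\TT^2)=\TT^2$ with $\delta,\delta'\ll\epsilon$; because the geometric estimate above is stable under such small translations, one gets $\int T^{\delta'}_\epsilon\wedge T^\delta=Q(T^{\delta'}_\epsilon,T^\delta)\to 0$ directly, and then $Q(T,T)=0$ by continuity of $Q$ on $H_e$. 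Once you insert the support-shrinking observation, your first three steps line up with the paper; without it, the proof does not close.
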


\begin{proof} This proof follows \cite{FS1} with slight modifications. It is included for the convenience of the reader. We know that if $T$ is a $(1,1)$ positive directed harmonic current it can be seen as $T=\int_A{[V_\alpha]h_\alpha}d\mu(\alpha)$ in a flow box $\Delta\times A$ where $h_\alpha$ is a harmonic function in the plaque $V_\alpha$. Hence, for   $T=\int_A{[V_\alpha]h_\alpha}d\mu(\alpha),T^\epsilon=\int_A{[V^\epsilon_\beta]h'_\beta}d\mu'(\beta)$ where $T^\epsilon=\tau^*_\epsilon(T)$, the geometric intersection is defined in the flow box over a test funtion $\phi$ as 
$$T\wedge_g T^\epsilon(\phi)= \int{\sum_{p\in J_{\alpha,\beta}^\epsilon} h_\alpha(p)h_\beta^\epsilon(p)d\mu(\alpha)d\mu(\beta)}$$
where $J^\epsilon_{\alpha,\beta}$ are the intersection points between $V_\alpha$ and $V_\beta^\epsilon$. Let us suppose that $\mu$ has finite transverse energy. As we proved before, the number of points in $J^{\epsilon}_{\alpha,\beta}$ is at most $A\log1/\epsilon$. As $h_\alpha$ and $h^\epsilon_\beta$ are uniformly bounded, 
$$|T\wedge_g T^\epsilon(\phi)|\leq C_1 \|\phi\|_\infty\int _{d_{min}(V_\alpha,V_\beta)\leq C\epsilon}A\log 1/\epsilon d\mu(\alpha)d\mu(\beta)$$
$$\leq C_2\|\phi\|_\infty\int_{d_{min}(V_\alpha,V_\beta)\leq C\epsilon}\log \frac{1}{d(V_\alpha,V_\beta)} d\mu(\alpha)d\mu(\beta)\rightarrow 0.$$

In the Lipschitz case the number of intersection points is bounded by $N$ independent of $\epsilon$. Therefore, 
$$|(T\wedge_g T_\epsilon)(\phi)|\leq C \|\phi\|_\infty\int_{d_{min}(V_\alpha,V_\beta)\leq C\epsilon} {Nd\mu(\alpha)d\mu(\beta)}\rightarrow 0$$
because $\mu$ has no mass on points.\par

Now, it is necessary to prove that $Q(T,T)=\int T\wedge T=0$. Since we are working on homogeneus Kähler manifolds, it is enough to prove that for smoothings $T^\delta,T^{\delta'}_\epsilon$, $Q(T^\delta,T^{\delta'}_\epsilon)\rightarrow 0$ when $\delta,\delta'$ are small enough compared to $\epsilon$ and $\delta,\delta',\epsilon$ go to $0$.\par

The estimate on the geometric wedge product is stable under
small translations $T_\epsilon$ of $T$. We can think in smoothing a current as an average of small translations.\par

Let $\phi$ be a test function supported in some local flow box. By definition,
the value of the geometric wedge product on $\phi$ is
$$\langle T\wedge T^\epsilon\rangle_g(\phi)= \int{\sum_{p\in J_{\alpha,\beta}^\epsilon} h_\alpha(p)h_\beta^\epsilon(p)d\mu(\alpha)d\mu(\beta)}.$$
But if we fix a plaque $[V_\beta^\epsilon]$ we can look for points in it which are also points of a plaque $V_\alpha$ and we write it like this
$$\langle T\wedge T_\epsilon\rangle_g(\phi)=\int\left(\int_{V_\beta^\epsilon}[\phi h_\alpha h_\beta^\epsilon](p)i\partial\parbar\log|w-f_\alpha(z)|d\mu(\alpha)\right)d\mu(\beta)$$
These expressions are small when $\epsilon$ is small. The same applies when we do this for slight translations within small neighbourhoods $U(\epsilon)$ of the identity in $Aut_0(\TT^2)=\TT^2$ and their smooth averages $T^\delta$. So, if we consider $\phi T^\delta$ as a smooth test form we get
$$\langle T_\epsilon,\phi  T^\delta\rangle=\int\left(\int_{V_\beta^\epsilon} [\phi h^\epsilon_\beta](p)T^\delta\right)d\mu(\beta).$$
Repeating the process, consider the averaging over small translations of $T_\epsilon$, we get that $T^{\delta'}_\epsilon\wedge T^\delta(\phi)\rightarrow 0$ when $\delta,\delta'<<\epsilon$ and $\epsilon\rightarrow 0$. Since this argument is made over flow boxes, we need to consider a partition of unity of $\phi$'s, so we get $T^{\delta'}_\epsilon\wedge T^\delta=Q(T^{\delta'}_\epsilon,T^\delta)\rightarrow 0$. Therefore $Q(T,T)=0$.
\end{proof}

Non existence of directed positive closed currents implies $(X,\LL)$ has no invariant complex line segments. Hence $Q(T,T)=0$ for every $T$ harmonic directed positive current. This observation leads us to the following corollary.

\begin{corollary}
 Let $(X,\LL)$ be a Lipschitz lamination in $\TT^2$ with no directed positive closed currents. Then there is a unique harmonic current $T$ of mass one directed by the lamination. Specifically it has only one minimal set.
\end{corollary}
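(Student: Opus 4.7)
The plan is to chain Theorem \ref{theo:dos} with the Forn�ss--Sibony framework of Section~2, and then promote uniqueness of the harmonic current to uniqueness of the minimal set. The hypothesis that $(X,\LL)$ carries no directed positive closed current is exactly what rules out invariant complex line segments, as observed immediately before the corollary: an invariant segment would (by analytic continuation in the holomorphically embedded leaf, together with the results preceding Lemma \ref{lemma:fs}) force parabolic leaves carrying a directed closed current. So Theorem \ref{theo:dos} applies in the Lipschitz regime and yields $Q(T,T)=0$ for every positive directed harmonic current $T$ of mass one.

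Next, since $\TT^2$ is a homogeneous K�hler surface and the lamination has no singular set (so $E=\emptyset$ trivially satisfies the pluripolarity and Hausdorff measure conditions), Theorem 2.1 provides a unique equivalence class $[T]$ of directed harmonic currents of mass one maximizing $Q$. The statement recorded just after Theorem 2.1 then upgrades class uniqueness to current uniqueness once $Q(T,T)=0$ holds for every directed harmonic current; the previous paragraph supplies exactly that hypothesis. Hence there is a unique positive directed harmonic current $T$ of mass one on $(X,\LL)$.

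For the final assertion, I would argue by contradiction. Suppose $(X,\LL)$ carried two disjoint minimal sets $X_1, X_2 \subset X$. Each $X_i$ is a compact sublamination by Riemann surfaces, and any directed positive closed current on $X_i$ would extend trivially to one on $X$, which is excluded by hypothesis. Applying the standard existence result (Garnett's theorem, or a Hahn--Banach/compactness argument on the convex compact set of directed positive currents of mass one supported in $X_i$) produces a directed positive harmonic current $T_i$ of mass one with $\mathrm{supp}(T_i)\subseteq X_i$. Since $X_1 \cap X_2=\emptyset$ we have $T_1\neq T_2$, contradicting the uniqueness already established. Therefore $(X,\LL)$ has exactly one minimal set.

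The main obstacle is precisely the upgrade from equivalence-class uniqueness in Theorem 2.1 to uniqueness of the current itself. That upgrade rests on $Q$ vanishing identically on the cone of directed positive harmonic currents, which is exactly the content of Theorem \ref{theo:dos} for Lipschitz laminations without invariant complex line segments; once that input is in hand, the rest of the corollary follows by combining Forn�ss--Sibony with a routine compactness argument on minimal sets.
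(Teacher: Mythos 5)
Your argument is correct and follows essentially the same route the paper intends: the remark preceding the corollary (no directed closed currents $\Rightarrow$ no invariant complex line segments $\Rightarrow$ $Q(T,T)=0$ by Theorem \ref{theo:dos} in the Lipschitz case) combined with Theorem 2.1 and the observation immediately after it. Your closing argument for uniqueness of the minimal set --- each minimal set supports at least one directed positive harmonic current of mass one, so two disjoint minimal sets would contradict uniqueness --- is the standard justification for the paper's final (unproved) assertion.
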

\end{section}
\begin{section}{Products of curves}
In this section we will deal with cases of some products of curves with many automorphisms. They are $\CP^1\times \CP^1$ and $\CP^1\times\TT^1$. We have a slightly different definition of verticality and horizontality here, but it is natural anyway based on their standard parametrizations. We define $\phi_1:\CC\rightarrow \CP^1$ as $\phi_1(w)=[1:w]$ and $\phi_2:\CC\rightarrow \CP^1$ as $\phi_2(z)=[z:1]$. For $\TT^1$, since $\pi:\CC\rightarrow \TT^1$ is locally injective, there exists $\delta>0$ such that $\pi_{|\Delta_\delta(z)}$ is injective for every $z\in\CC$. So, every $p$ in $X=\CP^1\times\CP^1, \TT^1\times\CP^1$ admits a parametrization $\varphi=(\varphi_1,\varphi_2)$ where $\varphi_i$ are injective restrictions to disks of these functions.

\begin{definition} $U\subset X$ is a horizontal flow box centered on $p=(p_1,p_2)$ if there is a parametrization as we said before with $\varphi(z_0,w_0)=(p_1,p_2)$, a disk $D_1$ centered at $0$, a subset $A$ contained on a disk $D_2$ centered at $0$ such that plaques of $\LL_{|U}$ are parametrized by $\varphi(z_0+z,w_0+\alpha+f_\alpha(z))$ for every $\alpha\in A$.
\end{definition} 

\begin{definition} 
We will say that a point $p$ of the lamination is horizontal if $\pi_2(T_p\LL)=0$.
\end{definition}

We define analogously vertical flow boxes and vertical points, and we can cover our lamination by horizontal or vertical flow boxes. Note that if $p$ is a horizontal point we can take a horizontal flow box on a neighbourhood of $p$, and if $\varphi(z_0,w_0)=p$, then $f_0'(0)=0$. \par

\begin{proposition} Every minimal lamination $(X,\LL)$ in $\TT^1\times\CP^1$ either has horizontal points, or is $\TT^1\times\{p\}$. And if $(X,\LL)$ is in $\CP^1\times\CP^1$ and there is a leaf $L$ without horizontal points, then $L=(f(p),p)$ is a closed leaf for $f:\CP^1\rightarrow\CP^1$ holomorphic.
\end{proposition}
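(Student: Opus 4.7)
My plan, in both the $\TT^1\times\CP^1$ and $\CP^1\times\CP^1$ cases, is to show that any leaf $L$ without horizontal points must be globally a holomorphic graph over the second factor $\CP^1$, and then to read off the two stated conclusions.

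First I would use the hypothesis directly: since $L$ has no horizontal points, $\pi_2|_L\colon L\to\CP^1$ has nowhere vanishing derivative, so in a vertical flow box around each point of $L$ each plaque is a graph $\{(g(w),w):w\in D\}$, and these local graphs glue by analytic continuation. This produces $L=\{(f(w),w):w\in U\}$ with $f$ holomorphic on the open set $U=\pi_2(L)\subset\CP^1$.

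The crux of the argument is to show $U=\CP^1$. I would argue by contradiction: fix $w_0\in\partial U$ and $p_n=(f(w_n),w_n)\in L$ with $w_n\to w_0$, and extract a limit $p_\infty\in\bar L$ (possible since $X$ is compact) with $\pi_2(p_\infty)=w_0$. If $p_\infty$ is not horizontal, a vertical flow box at $p_\infty$ contains plaques of $L$ surjecting onto a disk around $w_0$, forcing $w_0\in U$ and contradicting $w_0\in\partial U$. To exclude the possibility that $p_\infty$ is horizontal I mimic the Hurwitz argument of Proposition \ref{prop:one}: in a horizontal flow box at $p_\infty$ write the plaques as $w=h_\alpha(z)$, with $h_0'(0)=0$ for the plaque through $p_\infty$. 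If $h_0'\not\equiv 0$, applying Hurwitz to the sequence $h_{\alpha_n}'$ of graph derivatives of plaques of $L$ through $p_n$ produces zeros of $h_{\alpha_n}'$ close to the zero of $h_0'$, i.e.\ horizontal points on $L$, contradicting the hypothesis. The remaining possibility $h_0'\equiv 0$ forces the leaf $L'$ through $p_\infty$ to be a compact horizontal curve: in $\CP^1\times\CP^1$, $L'=\CP^1\times\{c\}$ has trivial fundamental group, hence trivial holonomy, so a saturated tubular neighbourhood of $L'$ is a product foliated by horizontal fibres, and $L$ accumulating on $L'$ would have to coincide with one of these, impossible since $L$ has no horizontal points; in $\TT^1\times\CP^1$, $L'=\TT^1\times\{c\}$ is compact and minimality forces $X=L'$, whence $L=L'$ is entirely horizontal --- again a contradiction.

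Once $U=\CP^1$, the leaf $L$ is the global graph of a holomorphic $f\colon\CP^1\to(\text{first factor})$. In $\CP^1\times\CP^1$, $f$ is rational and $L$ is the asserted closed embedded $\CP^1$. In $\TT^1\times\CP^1$, $f\colon\CP^1\to\TT^1$ lifts through the universal cover $\CC\to\TT^1$ to a bounded entire map on $\CP^1$, hence constant by Liouville; so $L=\{c\}\times\CP^1$, a vertical compact leaf, which by minimality equals $X$ (this is the single-leaf case referred to in the statement). I expect the subtle step to be the $h_0'\equiv 0$ case, where a naive application of Hurwitz does not conclude: it is precisely here that the triviality of the holonomy of a rational leaf (in $\CP^1\times\CP^1$) or the minimality hypothesis (in $\TT^1\times\CP^1$) is essential.
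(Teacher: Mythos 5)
Your overall route is the same as the paper's (which disposes of this proposition in two lines): write the leaf locally as a graph over the second factor, continue it, and classify the resulting holomorphic map from $\CP^1$ to the first factor; your extra care at accumulation points (the Hurwitz dichotomy and the degenerate case $h_0'\equiv 0$) is precisely what the paper leaves implicit in "analogous to Proposition 3.1". There is, however, one genuine gap as written: you assert that the local graphs glue to a single-valued $f$ on $U=\pi_2(L)$, and later that "once $U=\CP^1$" the leaf is the global graph. Nothing you have proved at that point rules out that $\pi_2|_L$ is a non-injective, non-proper local biholomorphism onto its image: for a leaf of a Riccati-type foliation transverse to the fibers of $\pi_2$ away from finitely many invariant fibers, $\pi_2|_L$ is typically an infinite covering of its image, so the gluing into a single-valued $f$ over $U$ genuinely fails in such situations, and surjectivity of $\pi_2|_L$ alone does not make $L$ a graph. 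The repair is to run your own accumulation argument for analytic continuation of a graph germ along an arbitrary path $\gamma$ in $\CP^1$: if continuation stops at some parameter $t_0$, compactness gives a limit point $p_\infty\in\bar L$ over $\gamma(t_0)$, and your trichotomy (non-horizontal point, Hurwitz, degenerate case) shows this is impossible; then the monodromy theorem on the simply connected $\CP^1$ yields a single-valued holomorphic $f$ whose compact graph is open and closed in $L$, hence equals $L$. With this reorganization the multivaluedness issue disappears and the argument is complete.

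Two smaller points. In the degenerate case you take for granted that the leaf $L'$ through $p_\infty$ is the whole horizontal curve: what $h_0'\equiv 0$ gives directly is only that $L'$ is an open subset of $\CP^1\times\{c\}$ (resp. $\TT^1\times\{c\}$); add the remark that $\bar{L'}$ is a closed union of leaves, each open in that curve, so by connectedness the curve is a single compact leaf. After that, your holonomy argument works (Reeb stability does hold for foliated spaces, or you can argue directly: a compact leaf contained in a neighbourhood $\CP^1\times D$ has constant second projection by the maximum principle, hence is a horizontal sphere, contradicting the absence of horizontal points on $L$), and minimality finishes the $\TT^1\times\CP^1$ case. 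Finally, your conclusion $L=\{c\}\times\CP^1$, forced by Liouville, is indeed what the paper's own proof produces (a constant map into the first factor); the "$\TT^1\times\{p\}$" in the statement appears to have the factors transposed, so your reading is the correct one.
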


\begin{proof}
The proof is analogous to Proposition \ref{prop:one}. We can consider a covering only with vertical flow boxes and, beginning with a vertical plaque $\Gamma_\alpha$ with a parametrization $\varphi(f_\alpha(z),z)$, we can extend $f_\alpha$ to obtain a holomorphic function from $\CP^1$ to the first factor of the surface. If the first factor is $\CP^1$, this function is rational, but if the first factor is $\TT^1$ there are no nonconstant holomorphic functions from $\CP^1$ to $\TT^1$.\par
\end{proof}

Clearly, the same is true for vertical points in $\CP^1\times\CP^1$. So every lamination $(X,\LL)$ embedded in it without compact curves has vertical and horizontal points.\par

\begin{theorem}
Let $(X,\LL)$ be a lamination without compact leaves having only one minimal set in $M=\CP^1\times\CP^1$ . Suppose that the point $p=([0:1],[1:0])$ is neither vertical nor horizontal and belongs to the minimal set. Let $\Phi_\epsilon$ be the automorphism of $\CP^1\times\CP^1$ defined as $\Phi_\epsilon([z_1:z_2],[w_1:w_2])=([z_1+\epsilon z_2:z_2],[w_1:w_2])$. Then, there exists a covering and some constants $\epsilon_0>0$, $N\in\NN$, $A>0$ such that
if $\Gamma_\alpha,\Gamma_\beta$ are plaques from the same flow box and

\begin{enumerate}
\item $(X,\LL)$ is a Lipschitz lamination then $Card(\Gamma_\alpha\cap\Gamma_\beta^\epsilon)<N$ with $\Gamma_\beta^\epsilon=\Phi_\epsilon(\Gamma_\beta)$
\item $(X,\LL)$ is not Lipschitz then $Card(\Gamma_\alpha\cap\Gamma_\beta^\epsilon)<A\log1/\epsilon$ 
\end{enumerate}

for every $\epsilon$ with $|\epsilon|<\epsilon_0$
\end{theorem}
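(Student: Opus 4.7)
The plan is to run the proofs of Theorems~\ref{theo:one} and~\ref{theo:three} in parallel, replacing the $\TT^2$-translations with the one-parameter family $\Phi_\epsilon$ and using the proposition just above in place of Proposition~\ref{prop:one}. The absence of compact leaves, via that proposition, guarantees that every leaf has both horizontal and vertical points, so around each horizontal (resp. vertical) point one can build a \emph{special} horizontal (resp. vertical) flow box in which $f'_\alpha$ has a uniformly bounded number $K_{p_h}$ (resp. $K_{p_v}$) of zeros on a concentric smaller disk. Completing these with ordinary bidisc flow boxes for the remainder and extracting a finite subcover yields the cover $\{B_i\}$ together with a uniform bound $M$ on the length of any chain of adjacent flow boxes needed to reach a special one, and a distortion constant $b$ governing how transversal distances behave across flow-box transitions.

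Next, exploit the specific form of $\Phi_\epsilon$. In the affine chart $(z,w)=(z_1/z_2,\,w_2/w_1)$ centered at the distinguished point $p=([0:1],[1:0])$, the map $\Phi_\epsilon$ is exactly the horizontal translation $(z,w)\mapsto(z+\epsilon,w)$; in any other affine chart used to build the cover it is a Möbius transformation of the first factor that depends holomorphically on $\epsilon$ and equals the identity at $\epsilon=0$, hence is $O(|\epsilon|)$-close to the identity in $C^1$ on each relatively compact piece. Read in a horizontal flow box, $\Gamma_\beta^\epsilon$ is therefore (up to a benign holomorphic change of coordinates) the graph $w=\beta+f_\beta(z-\epsilon)$, and in a vertical flow box it is the graph $z=\beta+f_\beta(w)+\epsilon$. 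Both scenarios are already the ones handled in Theorems~\ref{theo:one} and~\ref{theo:three}, and the Lipschitz (resp. holomorphic-motion) bilateral estimate
\begin{equation*}
\frac{|\alpha-\beta|}{C}<d_z(\Gamma_\alpha,\Gamma_\beta)<C|\alpha-\beta|
\end{equation*}
(resp. its squared analogue) carries over with the same uniform constant $C$.

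From here the argument is mechanical: if $\Gamma_\alpha$ and $\Gamma_\beta^\epsilon$ met at $N$ points in some flow box, part~(1) of Lemma~\ref{lemma:fs} would force $d_z(\Gamma_\alpha,\Gamma_\beta^\epsilon)/(K|\epsilon|)<c^N$ on a smaller concentric disk; iterating part~(2) of Lemma~\ref{lemma:fs} across the at most $M$ transitions needed to reach a special flow box yields an estimate of the form $d_z(\Gamma_{\alpha'},\Gamma_{\beta'}^\epsilon)<Kb^{M+2}c^{N/2^M}|\epsilon|$ in the Lipschitz case and the corresponding version with $|\epsilon|^{1/4^M}$ in the non-Lipschitz case. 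The triangle inequality, combined with the lower bound on $d_z(\Gamma_{\beta'},\Gamma_{\beta'}^\epsilon)$ coming from the Hurwitz analysis in the special flow box, then gives a contradiction as soon as $N$ exceeds a universal constant in case~(1) or $N>A\log(1/|\epsilon|)+B$ in case~(2). Setting $N:=\max(N',k)$ with $k=\max(k_h,k_v)$ the direct Hurwitz bound inside special flow boxes gives the stated conclusion.

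The main obstacle I expect is the global nonlinearity of $\Phi_\epsilon$: unlike a torus translation, it fixes the divisor $\{[1:0]\}\times\CP^1$ pointwise and distorts distances away from the privileged chart. The choice $p=([0:1],[1:0])$ is dictated by this constraint: it puts the "good" chart maximally away from the fixed divisor so that $\Phi_\epsilon$ is a genuine translation near $p$, and the assumption that $p$ lies in the minimal set is what ensures this good neighbourhood meets every leaf of $X$ and that the finite subcover can be taken at a uniformly positive distance from the fixed divisor. The bookkeeping required to confirm that the constants $C$, $b$, $M$ stay uniform over the finitely many charts despite $\Phi_\epsilon$ failing to be a translation in most of them is the step that will require the most care.
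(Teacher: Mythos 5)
Your plan transposes the torus argument essentially verbatim, but the step that makes the torus proof work --- that the translation displaces every plaque \emph{transversally} by an amount uniformly comparable to $|\epsilon|$ in the flow boxes where it acts transversally --- is precisely what fails here, and you flag it only as ``bookkeeping''. The automorphism $\Phi_\epsilon$ fixes the fiber $\{[1:0]\}\times\CP^1$ pointwise, and in the chart $z=z_2/z_1$ it reads $z\mapsto z/(1+\epsilon z)$, so the first-factor displacement is $|\epsilon z^{2}/(1+\epsilon z)|$, which is $o(|\epsilon|)$ near that fiber and vanishes on it. Hence your claim that in a vertical flow box $\Gamma_\beta^\epsilon$ is the graph $z=\beta+f_\beta(w)+\epsilon$ is false for flow boxes near the fixed fiber, and the crucial lower bound $d_z(\Gamma_\alpha,\Gamma_\beta)>|\epsilon|/C^{2}$ (equivalently $|\alpha-\beta|\gtrsim|\epsilon|$) at the flow box containing the $N$ intersection points is unavailable there. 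Worse, any plaque meeting the fixed fiber intersects its own $\Phi_\epsilon$-image at that fiber, so the case $\alpha=\beta$ already defeats a triangle inequality run against a lower bound for $d(\Gamma_{\alpha_0},\Gamma_{\beta_0})$.

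This is exactly where the hypothesis you never use must enter: the assumption that $p$ is neither vertical nor horizontal is not about placing a chart away from the fixed divisor, it is a condition on the tangent of the lamination at $p$. The paper does not propagate to special horizontal/vertical flow boxes at all; it builds one distinguished flow box $B_0$ centered at $p$, shrunk so that $0<f_0'(0)/2<f_w'(z)<2f_0'(0)$, and there converts the along-the-first-factor displacement of $\Phi_\epsilon$ into the transversal lower bound $\max_{|z|=\delta}d_z(\Gamma_\beta,\Gamma_\beta^\epsilon)\geq k|\epsilon|\delta^{2}/2$, valid for \emph{every} plaque independently of $\alpha$, $\beta$, and of where the intersections occurred. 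The contradiction is then obtained by bounding both $d(\Gamma_{\alpha'},\Gamma_{\beta'}^\epsilon)\leq K'|\epsilon|b^{2}c^{N/2^{M}}$ (the propagated smallness) and $d(\Gamma_{\alpha'},\Gamma_{\beta'})\leq C|\alpha'-\beta'|=C\,d_0(\Gamma_{\alpha'},\Gamma_{\beta'}^\epsilon)$ --- using that $\Phi_\epsilon$ is the identity on the central fiber of $B_0$ --- and comparing their sum with $k|\epsilon|\delta^{2}/2$ via the triangle inequality applied to the \emph{self}-displacement of $\Gamma_{\beta'}$. Without this device, or an equivalent replacement, your argument has a genuine gap for intersections occurring near the fixed divisor and for $\alpha$ close to $\beta$.
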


\begin{proof}
We will explain the Lipschitz case. The only one difference with non Lipschitz case is that the last one has a little bit more complicated inequalities as we could see in Theorem \ref{theo:three}.\par

First of all, we notice that $[1:0]\times \CP^1$ is invariant for every $\Phi_\epsilon$ and we consider a flow box centered on $p$. 
\begin{align}
\varphi:\Delta_\delta\times A&\rightarrow B_0\subset\CP^1\times\CP^1\\
(z,w)&\rightarrow([1:z],[f_w(z)+w:1])
\end{align}

small enough to hold that $0<\frac{f_0'(0)}{2}<f_w'(z)<2f'_0(0)$. We cover $\CP^1\times\CP^1\setminus{B_0}$ by horizontal or vertical flow boxes, and we obtain a covering $\mathcal{B}=\{B_i\}$. \par

The automorphism $\Phi_\epsilon$ sends $(z,w)$ to $(\frac{1}{1+\epsilon z},w)$, so the transversal distance between a plaque  $\Gamma_\beta$ of $\LL$ and $\Gamma_\beta^\epsilon$ the same one moved by $\Phi_\epsilon$,  is  

\begin{align*}
d_z(\Gamma_\beta,\Gamma_\beta^\epsilon)&=\left|\beta+f_\beta(z)-\beta-f_\beta\left(\frac{1}{1+\epsilon z}\right)\right|&\\
&=\left|f_\beta(z)-f_\beta\left(\frac{1}{1+\epsilon z}\right)\right|&\\
& \geq k \left|z-\frac{1}{1+\epsilon z}\right|\\
&=k \left|\epsilon\frac{z^2}{1+\epsilon z}\right|
\end{align*}

Therefore, $\max_{|z|\leq\delta}d_z(\Gamma_\beta,\Gamma_\beta^\epsilon)= \max_{|z|=\delta}d_z(\Gamma_\beta,\Gamma_\beta^\epsilon)\geq k|\epsilon||\delta|^2/2$ if $\epsilon$ small enough.\par

Now, we repeat the argument. Consider two plaques $\Gamma_\alpha$ and $\Gamma_\beta^\epsilon$ which intersect each other in $N$ points. Following a path, we reach $B_0$ in at most $M$ changes of flow boxes which is independent of the plaques. Let $\alpha'$ and $\beta'$ be the analytic continuation of the original plaques, and by same reasoning of Theorem \ref{theo:one}, we obtain that, $d_z(\Gamma_{\beta'}^\epsilon,\Gamma_{\alpha'})\leq K'|\epsilon|b^2 c^{N'/{2}^M}$ for every $z\leq\delta$, in fact for $z=0$, $d_0(\Gamma_{\beta'}^\epsilon,\Gamma_{\alpha'})=|\alpha'-\beta'|\leq K'|\epsilon|b^2 c^{N'/{2}^M}$, so $$d_z(\Gamma_{\alpha'},\Gamma_{\beta'})\leq C|\alpha'-\beta'|\leq CK'|\epsilon|b^2 c^{N'/{2}^M}.$$ Finally, 
\begin{align*}
\left|\frac{k\epsilon\delta^2}{2}\right|&\leq\max_{|z|\leq z}d_z(\Gamma_{\beta'}^\epsilon,\Gamma_{\beta'})\\
&\leq\max_{|z|\leq z}d_z(\Gamma_{\beta'}^\epsilon,\Gamma_{\alpha'})+\max_{|z|\leq z}d_z(\Gamma_{\beta'},\Gamma_{\alpha'})\\
&\leq K'|\epsilon|b^2 c^{N'/{2}^M}+C K'|\epsilon|b^2 c^{N'/{2}^M}
\end{align*} 

then if $N$ is big enough to hold $k|\epsilon||\delta|^2/2>(C+1)K'|\epsilon|b^2 c^{N'/{2}^M}$, a contradiction arises. So the number of intersection points is bounded
\end{proof}

\begin{theorem}
Let $(X,\LL)$ be a lamination without compact leaves and having only one minimal set embedded in $M=\TT^1\times\CP^1$. Let $\Phi_\epsilon([z_1],[w_1:w_2])=([z_1+\epsilon],[w_1:w_2])$ then there exists a covering of $\LL$ by flow boxes and some constants $N\in\NN$, $\epsilon_0>0$ and $A>0$ such that
if $\Gamma_\alpha,\Gamma_\beta$ are plaques from the same flow box and  $\Gamma_\beta^\epsilon=\Phi_\epsilon(\Gamma_\beta)$, 
\begin{enumerate}
\item $(X,\LL)$ is a Lipschitz lamination then $Card(\Gamma_\alpha\cap\Gamma_\beta^\epsilon)<N$ 
\item $(X,\LL)$ is not Lipschitz then $Card(\Gamma_\alpha\cap\Gamma_\beta^\epsilon)<A\log1/\epsilon$ 
\end{enumerate}

for $|\epsilon|<\epsilon_0$.
\end{theorem}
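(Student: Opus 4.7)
The argument is structurally parallel to that of Theorem \ref{theo:one} and of the preceding theorem, the only new feature being that $\Phi_\epsilon$ now acts by a pure translation $[z_1]\mapsto[z_1+\epsilon]$ in the $\TT^1$ factor. By the previous proposition, the unique minimal set $Y\subset X$ either equals some $\TT^1\times\{p\}$---a compact leaf, ruled out by hypothesis---or contains a horizontal point $p_h$. Around $p_h$ I would set up a relatively compact \emph{special horizontal flow box} with plaques parametrized as $([z_0+z],[w_0+\alpha+f_\alpha(z):1])$, chosen so that $f'_\alpha$ has at most $K$ zeros on $\Delta_{\delta/2}$ uniformly in $\alpha$. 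This is possible by Hurwitz's theorem together with the absence of compact leaves, since $f'_\alpha\equiv 0$ would yield a leaf of the form $\TT^1\times\{\mathrm{const}\}$. I would then cover the rest of $X$ by arbitrary horizontal or vertical flow boxes, extract a finite subcovering, and derive a global bound $M$ on the number of flow-box changes needed to connect any plaque to one of the special horizontal boxes.

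Mimicking the Hurwitz refinement performed in the second paragraph of the proof of Theorem \ref{theo:one}, a further shrinking of the special box would then produce $\epsilon_0>0$ and $k\in\NN$ such that for every $\alpha,\beta$ in its transversal and every $|\epsilon|<\epsilon_0$ the holomorphic function $f_\beta(z-\epsilon)-f_\alpha(z)-(\alpha-\beta)$ has at most $k$ zeros on $\Delta_\delta$, which bounds $\mathrm{Card}(\Gamma_\alpha\cap\Gamma_\beta^\epsilon)$ inside the special box.

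For a general flow box and plaques $\Gamma_\alpha,\Gamma_\beta$ whose $\Phi_\epsilon$-translate produces $N'$ intersection points, I would follow a path of at most $M$ flow-box changes to a special horizontal box, with analytic continuations $\Gamma_{\alpha'},\Gamma_{\beta'}$. The (Lipschitz or $1/2$-H\"older) transversality, combined with the global distortion constant $b$ and iterated applications of Lemma \ref{lemma:fs}, would give
\begin{equation*}
d_z(\Gamma_{\alpha'},\Gamma_{\beta'}^\epsilon)\;\lesssim\;|\epsilon|\,b^{2M}c^{N'/2^M}
\end{equation*}
in the Lipschitz case, and the corresponding $|\epsilon|^{1/4^M}$-type bound in the non-Lipschitz case, exactly as in Theorem \ref{theo:three}. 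The triangle inequality then forces a contradiction with the lower bound $d_z(\Gamma_{\beta'},\Gamma_{\beta'}^\epsilon)\gtrsim|\epsilon|$ (respectively $|\epsilon|^{4^M}$) coming from step two, as soon as $N'$ exceeds the threshold $N$ claimed in (1) or $A\log 1/|\epsilon|$ claimed in (2). Taking $N=\max\{N',k\}$ gives the assertion in both settings.

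The main technical hurdle is the uniform path-length bound $M$: unlike in Theorem \ref{theo:one}, non-minimal leaves may contain no horizontal points at all, so the covering near such leaves is built from arbitrary flow boxes only. This is handled by observing that every leaf of $\LL$ must accumulate on the unique minimal set $Y$ (otherwise its closure would contain a second minimal set), combined with the compactness of $X$, so that each of the finitely many flow boxes of the covering lies within at most $M$ flow-box changes of a special horizontal box anchored at a horizontal point of $Y$.
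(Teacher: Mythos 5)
Your proposal follows essentially the same route as the paper: the paper's own proof is a short sketch that reduces to Theorems \ref{theo:one} and \ref{theo:three} by covering the horizontal points (which exist by the preceding proposition, since compact leaves are excluded) with special horizontal flow boxes in which every plaque meets its $\Phi_\epsilon$-translate, and then running the same transversal-distance contradiction. Your extra discussion of the uniform path-length bound $M$ via accumulation of every leaf on the unique minimal set fills in a detail the paper leaves implicit, but does not change the argument.
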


\begin{proof}
The proof of this theorem is similar to theorems \ref{theo:one} and \ref{theo:three}. Since $\LL$ has no compact leaves, there are non horizontal points. Hence, we just need to take a finite covering of the horizontal points by special horizontal flow boxes, find a $\epsilon_0$ small enough to hold that every plaque in these flow boxes intersects itself when we move it by $\Phi_\epsilon$ if $|\epsilon|<\epsilon_0$, and get the same contradiction we obtain in theorems \ref{theo:one} and \ref{theo:three}.
\end{proof}

\begin{theorem}
 Let $(X,\LL)$ be a lamination without compact leaves having only one minimal set in $\CP^1\times\CP^1$ or $\TT^1\times\CP^1$. For every harmonic directed current $T$ of mass one with finite transverse energy, $Q(T,T)=0$. This is also true if the lamination is Lipschitz without the assumption of the finiteness of the transversal energy.
 \end{theorem}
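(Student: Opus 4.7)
The plan is to mirror the proof of Theorem \ref{theo:dos}, using the intersection bounds just established in terms of the automorphism families $\Phi_\epsilon$ in place of the translation bounds of Theorems \ref{theo:one} and \ref{theo:three}. In a flow box, decompose a positive directed harmonic current of mass one as $T=\int_A [V_\alpha] h_\alpha d\mu(\alpha)$ with $h_\alpha$ harmonic on the plaque and uniformly bounded, set $T^\epsilon = \Phi_\epsilon^* T = \int_A [V_\beta^\epsilon] h_\beta^\epsilon d\mu(\beta)$, and form the geometric wedge product
\begin{equation*}
T\wedge_g T^\epsilon(\phi) = \int \sum_{p\in J^\epsilon_{\alpha,\beta}} h_\alpha(p) h_\beta^\epsilon(p) \phi(p) d\mu(\alpha) d\mu(\beta).
\end{equation*}

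Next I would bound this integral in the two cases. In the finite transverse energy case, the previous theorem gives $|J^\epsilon_{\alpha,\beta}| \leq A\log(1/|\epsilon|)$, and a crossing between $V_\alpha$ and $V_\beta^\epsilon$ forces $d_{min}(V_\alpha,V_\beta) \leq C|\epsilon|$ since $\Phi_\epsilon$ is close to the identity, so
\begin{equation*}
|T\wedge_g T^\epsilon(\phi)| \leq C_1 \|\phi\|_\infty \int_{d_{min}(V_\alpha,V_\beta)\leq C|\epsilon|} \log\frac{1}{d(V_\alpha,V_\beta)} d\mu(\alpha) d\mu(\beta) \to 0
\end{equation*}
by the finite transverse energy hypothesis (using $\log(1/|\epsilon|) \leq \log(1/d(V_\alpha,V_\beta)) + \mathrm{const}$ on the domain of integration). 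In the Lipschitz case, $|J^\epsilon_{\alpha,\beta}| \leq N$ uniformly in $\epsilon$, and the integral vanishes simply because $\mu$ has no atoms.

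To pass from the geometric wedge estimate to the cohomological intersection $Q(T,T) = \int T\wedge T$, I would smooth. The identity component of $\mathrm{Aut}(M)$ is $PGL(2)\times PGL(2)$ for $M=\CP^1\times\CP^1$ and $\TT^1\times PGL(2)$ for $M=\TT^1\times\CP^1$; in both cases there are enough automorphisms close to the identity to smooth $T$ by averaging pullbacks $\Psi^* T$ over a small neighbourhood $U(\delta)$ of the identity. As in Theorem \ref{theo:dos}, the estimate on $T\wedge_g T^\epsilon$ is stable under such small smoothings of $T$, so that repeatedly averaging gives $Q(T^\delta, T^{\delta'}_\epsilon) \to 0$ whenever $\delta, \delta' \ll \epsilon$ and $\epsilon \to 0$. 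Gluing the flow-box estimates by a partition of unity and invoking the continuity of $Q$ on $H_e$ yields $Q(T,T) = 0$.

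The main obstacle will be verifying that the intersection bounds of the two preceding theorems persist not only for the specific one-parameter family $\Phi_\epsilon$ but uniformly for the arbitrary small automorphisms participating in the smoothing averages. Since those bounds rest only on Hurwitz's theorem and Lemma \ref{lemma:fs} applied to holomorphic graphs, and the uniformity of the constants $C$, $b$, $K'$ is already guaranteed by compactness of $X$ together with the Lipschitz or finite-transverse-energy hypothesis, the extension is mechanical; once it is recorded, the torus argument of Theorem \ref{theo:dos} transcribes without further change.
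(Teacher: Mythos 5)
Your proposal follows exactly the route the paper intends: the paper's entire ``proof'' is the single sentence that the argument is analogous to Forn\ae ss--Sibony and to Theorem \ref{theo:dos}, and your write-up is a faithful expansion of that analogy, substituting the $\Phi_\epsilon$-crossing bounds for the translation bounds and averaging over $\mathrm{Aut}_0(M)$ to smooth. You have in fact supplied more detail than the paper does, including correctly flagging the only delicate point (uniformity of the crossing estimates over all small automorphisms used in the averaging, not just the one-parameter family), which the paper leaves implicit.
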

 
The proof is analogous to \cite{FS1} and theorem \ref{theo:dos}.

\begin{corollary}
Let $(X,\LL)$ be a lamination without compact leaves having only one minimal set in $M=\CP^1\times\CP^1$ . Then there is no closed laminar current of mass one if $\LL$ is Lipschitz and, if the lamination is not Lipschitz, every closed laminated current of mass one has infinite transverse energy.
\end{corollary}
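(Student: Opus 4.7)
The plan is to argue by contradiction using the preceding theorem together with a cohomological computation on $\CP^1\times\CP^1$. Suppose a closed directed positive current $T$ of mass one exists. In the Lipschitz case the preceding theorem yields $Q(T,T)=0$; in the non-Lipschitz case, if $T$ had finite transverse energy the same theorem would again give $Q(T,T)=0$. The goal is to show that under the no-compact-leaves hypothesis such a closed $T$ of mass one must in fact satisfy $Q(T,T)>0$, which gives both statements of the corollary at once.

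For closed $T$, the decomposition $T=\Omega+\sbars$ has $\partial\overline{S}=0$, so $E(T)=0$ and $Q(T,T)=\int\Omega\wedge\Omega$ equals the cohomological self-intersection $[T]\cdot[T]$. The space $H^{1,1}(\CP^1\times\CP^1,\RR)$ is two-dimensional, generated by the pullbacks $\omega_1=\pi_1^*\omega_{FS}$ and $\omega_2=\pi_2^*\omega_{FS}$ of the Fubini-Study form from each factor, with $[\omega_i]^2=0$ and $[\omega_1]\cdot[\omega_2]=1$ for a suitable normalization. I would write $[T]=a[\omega_1]+b[\omega_2]$; the identities $a=\int T\wedge\omega_2\geq 0$ and $b=\int T\wedge\omega_1\geq 0$ follow from positivity of $T$, and non-triviality forces $a+b>0$. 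Consequently $Q(T,T)=2ab$, and $Q(T,T)=0$ would force $a=0$ or, symmetrically, $b=0$; say $a=0$.

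Then the non-negative measure $T\wedge\omega_2$ has total mass zero, so it vanishes identically. Locally writing $T=\int h_\alpha[V_\alpha]\,d\mu(\alpha)$ in a flow box gives $\int h_\alpha([V_\alpha]\wedge\omega_2)\,d\mu(\alpha)=0$, and since $\omega_2=\pi_2^*\omega_{FS}$ restricts to $V_\alpha$ as the pullback of a K\"ahler form via $\pi_2|_{V_\alpha}$, this restriction vanishes iff $\pi_2|_{V_\alpha}$ is constant, i.e., iff $V_\alpha$ is horizontal. Hence $V_\alpha$ is horizontal for $\mu$-almost every $\alpha$ with $h_\alpha>0$, and by analyticity of the tangent direction along a leaf the whole leaf through such a plaque has horizontal tangent everywhere, and in particular contains no vertical points.

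By the symmetric form of the preceding proposition (the version for vertical points noted after its proof), any leaf in $\CP^1\times\CP^1$ without vertical points is the graph of a holomorphic map $\CP^1\to\CP^1$, hence a compact curve, contradicting the no-compact-leaves hypothesis. Therefore $\mu=0$, so $T=0$ against mass one, and we conclude $Q(T,T)=2ab>0$, contradicting $Q(T,T)=0$. In the Lipschitz case this rules out any closed directed current of mass one; in the non-Lipschitz case it forces any such current to have infinite transverse energy. The step I expect to be the main obstacle is the propagation from ``$V_\alpha$ horizontal for $\mu$-almost every $\alpha$'' to ``a leaf of $X$ without vertical points exists'': one needs analyticity of the tangent direction along a leaf (Hurwitz's theorem, as in the earlier propositions of Section~3) to extend a local horizontal plaque to a globally horizontal leaf sitting inside the closed set $X$, so that the symmetric form of the preceding proposition can be applied. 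Once this is in place, the contradiction is just the intersection-theoretic identity $[T]\cdot[T]=2ab$.
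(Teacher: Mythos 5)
Your proof is correct, but after the common first step it takes a genuinely different route from the paper's. Both arguments begin identically: a closed directed current of mass one is in particular harmonic, so the preceding theorem gives $Q(T,T)=0$ (in the non-Lipschitz case under the finite transverse energy assumption), and since $\partial\overline{S}=0$ for closed $T$ this equals the cohomological self-intersection $\int\Omega\wedge\Omega$. From there the paper argues with the swap automorphism $S(p_0,p_1)=(p_1,p_0)$: it asserts that every positive closed laminar current of mass one has harmonic part $\Omega=\omega+\omega'$, invokes Duval's theorem (the reference [Du] in the paper) that wedge products of closed laminar currents are geometric, and evaluates $T\wedge_g S_*(T)$ in a flow box near $([0:1],[0:1])$ where $\LL$ is horizontal and $S(\LL)$ vertical, forcing $\mu\otimes\mu=0$ and hence $T=0$. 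You instead exploit the intersection form on $H^{1,1}(\CP^1\times\CP^1)$ directly: writing $[T]=a[\omega_1]+b[\omega_2]$ with $a,b\geq 0$, the vanishing $[T]^2=2ab=0$ pushes $[T]$ onto a fiber class; the vanishing of the positive measure $T\wedge\omega_2$ then forces $\mu$-almost every plaque with $h_\alpha>0$ into a horizontal fiber, and the identity theorem applied to $\pi_2$ restricted to the connected leaf, together with the symmetric form of Proposition 4.1, produces a compact leaf, contradicting the hypothesis. Your route buys self-containedness: it needs no geometricity result for intersections of laminar currents, no automorphism, and it never has to decide which of the two isotropic classes $\omega\pm\omega'$ the current represents --- a point the paper's computation of $Q(T,S(T))=\int\Omega\wedge\Omega=0$ passes over quickly, since $S_*$ interchanges the two fiber classes; the paper's route, in exchange, stays within the geometric-intersection formalism used throughout the article. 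The step you flag as the main obstacle --- propagating horizontality from one plaque to its whole leaf --- is not a real gap: it is exactly the identity theorem for the holomorphic map $\pi_2|_L$ on the connected leaf $L$, after which the symmetric version of Proposition 4.1 (explicitly noted in the paper after its proof) applies.
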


\begin{proof}
We know that if $T$ is closed of mass one $T=\Omega+\partial S+\overline{\partial S}$ for a unique $\square-$harmonic form $\Omega$ and $\partial \overline{S}=0$. So $\int{T\wedge T}=\int{\Omega\wedge\Omega}=0$, but we are on a Kähler manifold where the dimension of $H^{1,1}(M)$ is two. It is generated by the Kähler form $\omega$ and another form $\omega'$ such that $\int{\omega\wedge\omega}=1, \int{\omega'\wedge\omega'}=-1$ and $\int{\omega'\wedge\omega}=0$. So, every positive laminar closed current $T$ of mass one has $\Omega=\omega+\omega'$.\par

Now, consider the automorphism of $M$, $S(p_0,p_1)=(p_1,p_0)$. $S_*(T)$ is a laminar current for the lamination $S(\LL)$ which has similar properties to $\LL$.\par $Q(T,S(T))=\int{\Omega\wedge\Omega}=0$ following \cite{FS1} but, by \cite{Du}, the product of closed laminar currents is always geometric. So, we consider a flow box together for $\LL$ and $S(\LL)$ on a neighbourhood of 
$([0:1],[0:1])$ where $\LL$ is horizontal, so $S(\LL)$ is vertical and $T=\int_{\alpha\in A} {[V_\alpha] d\mu(\alpha)}$ where $V_\alpha$ are the plaques and $S(T)=\int_{\alpha\in A} {S^*[V_\beta] d\mu(\beta)}$ with the same $\mu$ then $T\wedge_g S(T)(\Phi)=\int_{A\times A} \Phi d\mu d\mu$ for a test form $\Phi$ but, as the intersection is geometric, this is $0$ for every test form, hence $T=0$. 
\end{proof}

We can also assure that when we have no closed directed currents, there is a unique harmonic positive current directed by the lamination of mass one. This remark in $\CP^1\times\CP^1$ is coherent with \cite{BG}, where it is proven that there is a unique harmonic foliated measure for Riccati foliations when there are no holonomy invariant measures.

\end{section}
\begin{section}*{Acknowledgments}
Partially supported by Ministerio de Ciencia e Innovación (Spain), MTM2010-15481. The author is supported by a predoctoral grant of Universidad Complutense de Madrid.\par

This paper will be part of my Ph.D. thesis directed by Luis Giraldo and John Erik Fornæss. I thank Professor Giraldo for his advice, encoraugement, support and careful reading of this article and to Professor Fornæss for his help and for so many suggestions, comments and explanations which have been essential in the developement of these results. I also want to thank to the referee, whose suggestions were helpful in order to improve the generality of the theorems as well as the clarity of the arguments.
\end{section}
\bibliographystyle{plain}
\bibliography{mybib}{}
\vspace{1cm}
\begin{flushleft}
C. Pérez-Garrandés \\
Instituto de Matemáticas Interdisciplinar (IMI)\\
Departamento de Geometría y Topología\\
Facultad de Ciencias Matemáticas\\
Universidad Complutense de Madrid\\
              Plaza de las Ciencias 3\\
              28040 Madrid\\
           carperez@ucm.es\\
          \end{flushleft}

\end{document}